\documentclass{amsart}
\title{Continuous-trace $k$-graph $C^*$-algebras}
\author{Danny Crytser}
\usepackage{enumerate}
\usepackage{hyperref} 
\usepackage{mathtools}
\usepackage{graphicx,amssymb,amsmath,amsthm,amsfonts}

\usepackage{tikz}
\usetikzlibrary{matrix}
\usetikzlibrary{decorations.markings}
\usetikzlibrary{arrows}
\usepackage{hyperref}
\usepackage{amssymb,amsmath,amsthm}

\newcommand{\C}{\mathbb{C}}

\newcommand{\N}{\mathbb{N}}

\newcommand{\Z}{\mathbb{Z}}

\theoremstyle{plain}
\newtheorem{theorem}{Theorem}[section]

\newtheorem{lemma}[theorem]{Lemma}
\newtheorem{corollary}[theorem]{Corollary}
\theoremstyle{definition}
\newtheorem{definition}[theorem]{Definition}

\newtheorem{example}[theorem]{Example}
\theoremstyle{remark}
\newtheorem*{remark}{Remark}

\bibliographystyle{amsplain}      

\begin{document}

% Short title is optional, it will appear in running heads.
% It is necessary only if the title is to long to be used in running heads

%\dedicated{Dedicated to...}                    % Optional

\begin{abstract}
A characterization is given for directed graphs that yield graph $C^*$-algebras with continuous trace. This is established for row-finite graphs with no sources first using a groupoid approach, and extended to the general case via the Drinen-Tomforde desingularization. Partial results are given to characterize higher-rank graphs that yield $C^*$-algebras with continuous trace.
\end{abstract}

\maketitle

\section{Introduction}

To any directed graph $E$ one can affiliate a graph $C^*$-algebra $C^*(E)$, generated by a universal family of projections and partial isometries that satisfy certain Cuntz-Krieger relations. Many $C^*$-algebraic properties of $C^*(E)$ are governed by graph-theoretic properties of $E$. For example, $C^*(E)$ is an AF algebra exactly when $E$ has no directed cycles. To any graph $E$ there is also affiliated a \emph{path groupoid} $G_E$, an \'{e}tale groupoid which models the shift dynamics of infinite paths in $E$. This groupoid provides an alternate model for $C^*(E)$, in the sense that $C^*(E) \cong C^*(G_E)$. This isomorphism allows for the use of tools from the theory of groupoid $C^*$-algebras to study graph algebras. In this paper we give an example of this approach, characterizing continuous-trace graph $C^*$-algebras by applying the main result of \cite{MRW3} to the path groupoid of a directed graph. 

The path groupoid is easiest to use if the graph is \emph{non-singular}, in the sense that each vertex is the range of a finite non-empty set of edges. We first work in the non-singular case, and then use desingularization to extend to the general case. Desingularization takes a non-regular graph $E$ and returns a non-singular graph $\tilde{E}$ such that the affiliated graph $C^*$-algebras are Morita equivalent (so that continuity of trace is preserved). As an application we use a result from \cite{Tyler} to characterize continuous trace AF algebras in terms of their Bratteli diagrams. 

In the last section we consider higher-rank graph $C^*$-algebras with continuous trace. Higher-rank graphs are categories which generalize the category of finite directed paths within a directed graph. They have $C^*$-algebras defined along the same lines as graph $C^*$-algebras. We include necessary background on the theory of higher-rank graph algebras. Again, the use of groupoids is crucial. The higher-rank case is more complicated and we are only able to give partial results. In particular, giving a combinatorial description of when the isotropy groups vary continuously for a $k$-graph path groupoid seems out of reach, so we focus instead on the principal/aperiodic case. We note a simple necessary condition on a higher-rank graph for its associated $C^*$-algebra to have continuous trace, a corollary of a result from \cite{ES}.

While this paper was in preparation we were made aware of a related paper of Hazlewood (\cite{Hazlewood}) which contains similar results to ours. In particular, Theorems 6.5.22, 6.2.13 and 6.4.11 in \cite{Hazlewood} are in some sense cycle-free/principal versions of Theorem \ref{main-guy} and Theorem \ref{k-big}. The results in \cite{Hazlewood} also show that some desingularization \'{a} la Section 4 in the present paper is possible for the $k$-graph case (although it seems that resolving infinite receivers is somewhat more difficult). 

\section{Continuous-trace $C^*$-algebras; graph algebras; groupoids}

For an element $a$ in a $C^*$-algebra $A$, and a unitary equivalence class $s = [\pi] \in \hat{A}$, we define the rank of $s(a)$ to be the rank of $\pi(a)$. We say that $s(a)$ is a projection if and only if $\pi(a)$ is a projection.

\begin{definition}{\cite[Def. 5.13]{RaeWil}}
Let $A$ be a $C^*$-algebra with Hausdorff spectrum $\hat{A}$. Then $A$ is said to have \emph{continuous trace} (or be continuous-trace) if for each $t \in \hat{A}$ there exist an open set $U \subset \hat{A}$ containing $t$ and an element $a \in A$ such that $s(a)$ is a rank-one projection for every $s \in U$.
\end{definition}

For an introduction to graph$C^*$- algebras, please see \cite{Raeburn}. The reader who is already familiar with graph $C^*$-algebras may pass over the following standard definitions. 

\begin{definition}
A \emph{(directed) graph} $E$ is an ordered quadruple $E = (E^0,E^1,r,s)$, where the $E^0$ and $E^1$ are countable sets called the \emph{vertices} and \emph{edges}, and $r,s:E^1 \to E^0$ are maps called the \emph{range} and \emph{source} maps. 

A vertex $v$ is called an \emph{infinite receiver} if there are infinitely many edges in $E^1$ with range $v$; a vertex is called a \emph{source} if it receives no sources. A vertex is \emph{regular} if it is neither an infinite receiver or a source; otherwise, it is called \emph{singular}. A graph is \emph{row-finite} if it has no infinite receivers and has \emph{no sources} if every vertex receives an edge. A \emph{cycle} in a directed graph is a path $\lambda \in E^* \setminus E^0$ with $r(\lambda)=s(\lambda)$; a \emph{simple cycle} is a cycle $\lambda$ which does not contain another cycle. An \emph{entrance} to the cycle $\lambda=e_1 \ldots e_n$ is an edge $e$ with $r(e)=e_k$ and $e \neq e_k$. 

The finite path space $E^*$ consists of all finite sequences $e_1 \ldots e_n$ in $E^1$ such that $s(e_i)=r(e_{i+1})$ for $i=1,\ldots,n-1$. The range of the path $e_1 \ldots e_n$ is defined to be $r(e_1)$ and its source is $s(e_n)$. If $\mu=e_1 \ldots e_n$ is a finite path, then we define the length to be $n$ and write $|\mu|=n$. The vertices are included in the finite path space as the paths of length zero. If $\lambda = e_1 \ldots e_n$ and $\mu = f_1 \ldots f_m$ are finite paths with $s(\lambda)=r(\mu)$, we can concatenate them to from $\lambda \mu = e_1 \ldots e_n f_1 \ldots f_m \in E^*$. The \emph{infinite path space} is $E^\infty = \{ e_1 e_2 \ldots | s(e_i) =r(e_{i+1}) \forall i \geq 1 \}$. If $\lambda = e_1 \ldots e_n \in E^*$ and $x = f_1 \ldots \in E^\infty$, then $\lambda x = e_1 \ldots e_n f_1 \ldots \in E^\infty$. The range of $x=e_1 e_2 \ldots \in E^\infty$ is defined as $r(x):=r(e_1)$. The shift map $\sigma:E^\infty \to E^\infty$ removes the first edge from an infinite path: $\sigma(e_1 e_2 \ldots) = e_2 e_3 \ldots$. Composing $\sigma$ with itself yields powers $\sigma^2, \sigma^3,\ldots$. 
\end{definition}

\begin{definition}
Let $E$ be a directed graph. Then the \emph{graph $C^*$-algebra} of $E$, denoted $C^*(E)$, is the universal $C^*$-algebra generated by projections $\{p_v: v \in E^0\}$ and partial isometries $\{s_e: e \in E^1\}$ satisfying the following Cuntz-Krieger relations: 

\begin{enumerate}
\item $s_e^* s_e = p_{s(e)}$ for any $e \in E^1$; 
\item $s_e s_e^* \leq p_{r(e)}$ for any $e \in E^1$; 
\item $s_e^* s_f=0$ for distinct $e,f \in E^1$;
\item If $v$ is regular, then $p_v = \sum_{r(e)=v} s_e s_e^*$. 
\end{enumerate} 
\end{definition}

We also include the basic definitions for groupoids. A concise definition of a groupoid is a small category with inverses; we include a more detailed definition.

A \emph{groupoid} is a set $G$ along with a subset $G^{(2)} \subset G \times G$ of \emph{composable pairs} and a two functions: composition $\circ: G^{(2)} \to G$ (written $(\alpha,\beta) \to \alpha \beta)$ and an involution $^{-1}: G \to G$ (written $\gamma \to \gamma^{-1}$) such that the following hold: 
\begin{itemize}
\item[(i)] $\gamma(\eta \zeta) = (\gamma \eta) \zeta$ whenever $(\gamma,\eta),(\eta,\zeta) \in G^{(2)}$; 
\item[(ii)] $(\gamma,\gamma^{-1}) \in G^{(2)}$ for all $\gamma \in G$, and $\gamma^{-1}(\gamma \eta) = \eta$ and $(\gamma \eta) \eta^{-1} = \gamma$ for $(\gamma,\eta) \in G^{(2)}$. 
\end{itemize}
Elements satisfying $g = g^2 \in G$ are called \emph{units} of $G$ and the set of all such units is denoted $G^{(0)} \subset G$ and called the \emph{unit space} of $G$. There are maps $r,s: G \to G^{(0)}$ defined by 
\[
r(\gamma) = \gamma \gamma^{-1} \qquad \qquad s(\gamma) =\gamma^{-1} \gamma \]
that are called, respectively, the \emph{range} and \emph{source} maps. These maps orient $G$ as category, with units serving as objects: $(\alpha,\beta) \in G^{(2)}$ if and only if $s(\alpha)=r(\beta)$. For a given unit $u \in G^{(0)}$ there is an associated group $G(u) = \{\gamma \in G: r(\gamma) = s(\gamma) = u \}$; this is called the \emph{isotropy} or \emph{stabilizer group} of $u$. The union of all isotropy groups in $G$ forms a subgroupoid of $G$ called $\operatorname{Iso}(G)$, the \emph{isotropy bundle} of $G$. A groupoid is called \emph{principal} (or an \emph{equivalence relation}) if $\operatorname{Iso}(G) = G^{(0)}$; that is, if no unit has non-trivial stabilizer group. 

A topological groupoid is a groupoid $G$ endowed with a topology so that the composition and inversion operations are continuous (the domain of $\circ$ is equipped with the relative product topology). A topological groupoid is \emph{\'etale} if the topology is locally compact and the range and source maps are local homeomorphisms. All the groupoids we encounter in this paper will be Hausdorff and second countable. Note that if $G$ is \'etale then each range fiber $r^{-1}(u)$ is a discrete in the relative topology (likewise for source fibers). Hence a compact subset of $G$ meets a given range fiber at most finitely many times. 

In order to define a $C^*$-algebra from an \'etale groupoid $G$, it is necessary to specify a $*$-algebra structure on $C_c(G)$. This is given by 
\[
(f*g)(\gamma) = \sum_{(\alpha,\beta) \in G^{(2)}: \alpha \beta = \gamma} f(\alpha) g(\beta);\]
compactness of supports ensures that this sum gives a well-defined element of $C_c(G)$. (Really the important thing here is that the counting measures on range fibers form a \emph{Haar system}, which is necessary for any topological groupoid to define a $C^*$-algebra; see \cite{Renault}.) We do not include all details on how to put a norm on $C_c(G)$, these can be found in \cite{Renault}. In brief, there are two distinguished $C^*$-norms $||\cdot||$,$|| \cdot||_r$ on $C_c(G)$ and completing in these yields the full groupoid $C^*$-algebra $C^*(G)$ and the reduced groupoid $C^*_r(G)$, respectively. Our interest in groupoid $C^*$-algebras will strictly be as an alternate model for graph $C^*$-algebras. 

\begin{definition}
Let $E$ be a graph and let $E^\infty$ denote its infinite path space. Then the \emph{path groupoid} of $E$ is 
\[
G_E = \{(x,n,y) \in E^\infty \times \Z \times E^\infty:  \exists p,q \in \N \text{ such that } \sigma^p x = \sigma^q y \text{ and } p-q=n \} \]
The groupoid operations are $(x,n,y) (y,m,z)=(x,m+n,z)$ and $(x,n,y)^{-1}=(y,-n,x)$. The unit space is identified with $E^\infty$ via the mapping $x \mapsto (x,0,x)$, so that the range and source maps are given by $r(x,n,y) = x$ and $s(x,n,y)=y$. 
\end{definition}

\begin{remark} If $G=G_E$, then the isotropy group of an infinite path $x$ is either trivial ($\sigma^p x = \sigma^q x$ implies $p=q$) or infinite cyclic (in which case $x = \alpha (\lambda^\infty)$ for some finite path $\alpha$ and cycle $\lambda$). 
\end{remark}

The topology on $G_E$ is generated by basic open sets of the form
\[
Z(\alpha,\beta) = \{ (\alpha z, |\alpha|-|\beta|, \beta z) \in G_E : r(z) = s(\alpha) \} \]
where $\alpha,\beta \in E^*$ with $s(\alpha)=s(\beta)$. The topology defined above restricts to the relative product topology on $G_E^{(0)} = E^\infty \subset \prod_{\mathbb{N}} E^1$, if we treat $E^1$ as a discrete space. We will refer to this topology on $E^\infty$ using basic compact-open sets of the form $Z(\alpha) = \{ \alpha x | x \in E^\infty, r(x) = s(\alpha) \}$.

It is noted in \cite{KPRR} that $Z(\alpha,\beta) \cap Z(\gamma,\delta) = \emptyset$ unless $(\alpha,\beta)=(\gamma \epsilon, \delta \epsilon)$ or vice versa.  This topology makes $G_E$ into an \'etale groupoid (\cite[Prop. 2.6]{KPRR}), because the restriction of the range map to the basic sets is a homeomorphism, and furthermore each basic set is compact. Thus $G_E$ has a canonical Haar system $\{\lambda_x\}_{x \in E^\infty}$ consisting of counting measures on the source fibers. Because $G_E$ is an \'etale groupoid, it has an groupoid $C^*$-algebra $C^*(G_E)$ (\cite{Renault}). The following theorem has been modified from its original statement to fit our orientation convention.

\begin{theorem}[{\cite[Thm. 4.2]{KPRR}}]
For any row-finite graph with no sources $E$, we have $C^*(E) \cong C^*(G_E)$ via an isomorphism carrying $s_e$ to $\mathbf{1}_{Z(e,s(e))} \in C_c(G)$ and $p_v$ to $\mathbf{1}_{Z(v,v)}$. 
\end{theorem}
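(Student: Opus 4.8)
The plan is to use the universal property of $C^*(E)$ to produce a $*$-homomorphism onto $C^*(G_E)$, verify that it is surjective, and then obtain injectivity from the gauge-invariant uniqueness theorem for graph $C^*$-algebras; the last step carries the real content. Write $S_e = \mathbf{1}_{Z(e,s(e))}$ and $P_v = \mathbf{1}_{Z(v,v)}$, viewed in $C_c(G_E) \subseteq C^*(G_E)$. First I would record the convolution rule for indicators of basic bisections: since the $Z(\alpha,\beta)$ are compact open bisections and $Z(\alpha,\beta)$ composes with $Z(\gamma,\delta)$ only when $\beta,\gamma$ are comparable, one gets $\mathbf{1}_{Z(\alpha,\beta)} * \mathbf{1}_{Z(\beta\epsilon,\delta)} = \mathbf{1}_{Z(\alpha\epsilon,\delta)}$ (and $0$ in the incomparable case), together with $\mathbf{1}_{Z(\alpha,\beta)}^* = \mathbf{1}_{Z(\beta,\alpha)}$. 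From these: $S_e^* S_e = \mathbf{1}_{Z(s(e),s(e))} = P_{s(e)}$; $S_e S_e^* = \mathbf{1}_{Z(e,e)}$, which inside $C_0(E^\infty)$ satisfies $\mathbf{1}_{Z(e)} \leq \mathbf{1}_{Z(r(e))} = P_{r(e)}$; $S_e^* S_f = 0$ for distinct $e,f$ since $Z(e) \cap Z(f) = \emptyset$; and, for a regular vertex $v$, the finite family $\{Z(e) : r(e) = v\}$ partitions $Z(v)$ (finite because $v$ is not an infinite receiver, covering because $v$ is not a source), so $\sum_{r(e)=v} S_e S_e^* = \mathbf{1}_{Z(v)} = P_v$. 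Hence $(S_e, P_v)$ is a Cuntz--Krieger $E$-family and the universal property yields $\pi \colon C^*(E) \to C^*(G_E)$ with $\pi(s_e) = S_e$ and $\pi(p_v) = P_v$.

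For surjectivity, $G_E$ is locally compact Hausdorff with the compact open bisections $Z(\alpha,\beta)$ (over $s(\alpha)=s(\beta)$) forming a basis, so every $f \in C_c(G_E)$ is a finite $\C$-linear combination of such indicators: cover the support by basic sets and refine to a partition using the disjointness clause $Z(\alpha,\beta) \cap Z(\gamma,\delta) = \emptyset$ unless the pairs are comparable. Writing $S_\mu = S_{e_1}\cdots S_{e_n}$ for $\mu = e_1\cdots e_n$, the convolution rule gives $S_\mu = \mathbf{1}_{Z(\mu,s(\mu))}$, hence $S_\mu S_\nu^* = \mathbf{1}_{Z(\mu,\nu)}$ whenever $s(\mu) = s(\nu)$. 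Thus $C_c(G_E)$ lies in the image of $\pi$; as $C_c(G_E)$ is dense in $C^*(G_E)$ and the image of a $*$-homomorphism of $C^*$-algebras is closed, $\pi$ is onto.

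The main obstacle is injectivity. The continuous homomorphism $c \colon G_E \to \Z$, $c(x,n,y) = n$, induces a strongly continuous action $\beta$ of $\T$ on $C^*(G_E)$ determined on $C_c(G_E)$ by $(\beta_z f)(\gamma) = z^{c(\gamma)} f(\gamma)$. Since elements of $Z(e,s(e))$ have $c$-value $|e| - |s(e)| = 1$ and $Z(v,v)$ lies in the unit space, $\beta_z(S_e) = z S_e$ and $\beta_z(P_v) = P_v$, which matches the defining formulas for the canonical gauge action $\gamma$ on $C^*(E)$; therefore $\pi \circ \gamma_z = \beta_z \circ \pi$ for all $z \in \T$. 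Moreover $\pi(p_v) = \mathbf{1}_{Z(v,v)} \neq 0$: because $E$ has no sources there is an infinite path with range $v$, so $Z(v,v)$ is a nonempty compact open subset of $G_E^{(0)} = E^\infty$, and $C_0(G_E^{(0)})$ embeds isometrically into $C^*(G_E)$ (factor through $C^*_r(G_E)$, where the restriction conditional expectation onto $C_0(G_E^{(0)})$ is faithful). The gauge-invariant uniqueness theorem then forces $\pi$ to be injective, so $\pi$ is the asserted isomorphism.

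I expect the fiddly points to be (i) keeping the $Z(\alpha,\beta)$ bookkeeping consistent with the paper's range/source and length conventions when computing convolutions and adjoints, and (ii) the two imported facts about $C^*(G_E)$ used in the last step --- that a $\Z$-valued cocycle yields a $\T$-action and that $C_0$ of the unit space sits faithfully inside $C^*(G_E)$ --- which are standard but are exactly where groupoid machinery (a suitable regular representation, or amenability of $G_E$ so that the full and reduced algebras coincide) enters. An alternative to the gauge-invariant route would be to construct an explicit inverse by building a faithful representation of $C^*(G_E)$ out of a universal Cuntz--Krieger family, but that is longer.
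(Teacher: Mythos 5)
This theorem is imported verbatim from \cite{KPRR} (their Theorem 4.2); the paper you are reading gives no proof of it, so there is nothing internal to compare against. Your argument is the standard modern route --- verify that the indicators $\mathbf{1}_{Z(e,s(e))}$, $\mathbf{1}_{Z(v,v)}$ form a Cuntz--Krieger $E$-family, invoke universality for a surjection, and get injectivity from the gauge-invariant uniqueness theorem by intertwining the gauge action with the $\mathbb{T}$-action induced by the cocycle $c(x,n,y)=n$ --- and it is essentially sound; KPRR's original proof likewise hinges on the circle action coming from this cocycle and the faithfulness of the associated expectation, though they argue before the gauge-invariant uniqueness theorem was available in its packaged form. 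One step is stated too strongly: an arbitrary $f\in C_c(G_E)$ is \emph{not} a finite $\C$-linear combination of indicators $\mathbf{1}_{Z(\alpha,\beta)}$ (on a totally disconnected space a continuous function need not be locally constant --- only uniformly approximable by such). What is true, and all you need, is that the span of these indicators is dense in $C_c(G_E)$ in the inductive-limit topology, hence dense in $C^*(G_E)$; combined with the closedness of the range of a $*$-homomorphism this still yields surjectivity, so the slip is cosmetic. The remaining imported facts (the $\mathbb{T}$-action from an integer-valued cocycle, the isometric embedding of $C_0(E^\infty)$ via the reduced algebra, and nonvanishing of $\mathbf{1}_{Z(v,v)}$ from the no-sources hypothesis) are used correctly, and there is no circularity since the gauge-invariant uniqueness theorem is proved independently of the groupoid model.
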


(It is a fact that $G_E$ is always an \emph{amenable} groupoid, so that we have $C^*(G_E) = C^*_r(G_E)$.) To describe graph $C^*$-algebras with continuous trace, we need to know when the isotropy groups $G(u)$ vary continuously with respect to the unit $u \in G^{(0)}$. First the topology on the set of isotropy groups has to be defined. 

\begin{definition}[{\cite{RaeWil}}]
Let $X$ be a topological space. Consider the collection $F(X)$ of all closed subsets of $X$; the \emph{Fell topology} on $F(X)$ is defined by the requirement that a net $(Y_i)_{i \in I} \subset F(X)$ converges to $Y \in F(X)$ exactly when
\begin{enumerate}
\item if elements $y_i$ are chosen in $Y_i$ such that $y_i \rightarrow z$, then $z$ belongs to $Y$, and 
\item for any element $y \in Y$ there is a choice of elements $y_i \in Y_i$ (possibly taking a subnet of $(Y_i)$ and relabeling) such that $y_i \to y$. 
\end{enumerate}
We say that $G$ has \emph{continuous isotropy} if the isotropy map $G^{(0)} \to F(G)$ defined by $x \mapsto G(x)$ is continuous.
\end{definition}

We will not need to handle the Fell topology directly in this paper, thanks to the following result which describes continuous isotropy for graph algebras. 

\begin{theorem}[\cite{Goehle2}]
Let $E$ be a row-finite graph with no sources. Then $G_E$ has continuous isotropy if and only if no cycle in $E$ has an entrance. 
\end{theorem}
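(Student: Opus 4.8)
The plan is to prove both directions by working directly with the explicit basic open sets $Z(\alpha,\beta)$ and the characterization of infinite-cyclic isotropy from the Remark: the isotropy group $G(x)$ is nontrivial precisely when $x = \alpha(\lambda^\infty)$ for some finite path $\alpha$ and cycle $\lambda$, in which case $G(x) \cong \Z$ is generated by an element of the form $(x, |\lambda|, x) \in Z(\alpha\lambda, \alpha)$ (up to the orientation convention). So the real content is translating the two Fell-convergence conditions into statements about how nearby infinite paths $y \in Z(\beta)$ behave, and this is where entrances to cycles enter.

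First I would prove the contrapositive of the ``if'' direction: suppose some cycle $\lambda$ in $E$ has an entrance $e$. Then $x = (r(\lambda))(\lambda^\infty)$ (writing $\lambda$ so that $r(\lambda)=s(\lambda)$) has isotropy $G(x) \cong \Z$ generated by $g = (x, |\lambda|, x)$. Using the entrance $e$, I would build a sequence $x_n \to x$ in $E^\infty$ of paths that eventually ``turn off'' the cycle — explicitly $x_n = \lambda^n e z$ for a suitable $z$ — so that $\sigma^p x_n = \sigma^q x_n$ forces $p=q$ and hence $G(x_n)$ is trivial. Since each $Z(\alpha,\beta)$ meeting a unit $x_n$ near $x$ must, for $n$ large, force the cyclic generator $g$ to be approximated by trivial-isotropy points, condition (2) of Fell convergence fails for the net $G(x_n) = \{x_n\} \to G(x) \ni g \neq x$. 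Hence the isotropy map is not continuous at $x$.

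For the ``only if'' direction, assume no cycle has an entrance, and let $x_i \to x$ in $E^\infty$. I must check conditions (1) and (2). For (1), if $\gamma_i \in G(x_i)$ and $\gamma_i \to \gamma$ in $G_E$, then $r(\gamma) = s(\gamma) = \lim r(\gamma_i) = \lim x_i = x$, so $\gamma \in G(x)$ — this direction is automatic from continuity of $r$ and $s$ and costs nothing. For (2), the work is: given $\gamma \in G(x)$, produce $\gamma_i \in G(x_i)$ with $\gamma_i \to \gamma$. If $\gamma = x$ (the unit) take $\gamma_i = x_i$. If $\gamma \neq x$, then $x = \alpha(\lambda^\infty)$ and $\gamma = (x, m|\lambda|, x)$ for some $m \neq 0$; the key point is that because $\lambda$ has no entrance, every path sufficiently close to $x$ — i.e. agreeing with $x$ on a long enough initial segment extending past $\alpha$ into the cycle — is \emph{forced} to continue as $\alpha'(\lambda^\infty)$ for a rotation/truncation $\alpha'$ of $\alpha$, simply because at each vertex of the cycle there is a unique edge received. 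Hence for $i$ large, $x_i$ itself lies on the tail of the same cycle, $G(x_i) \cong \Z$, and the generator $(x_i, |\lambda|, x_i)$ lies in $Z(\alpha_i \lambda', \alpha_i)$-type neighborhoods shrinking to the corresponding neighborhood of $\gamma$; taking $\gamma_i$ to be the appropriate power gives $\gamma_i \to \gamma$.

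The main obstacle is the bookkeeping in condition (2): one must argue carefully that ``no entrance'' propagates the eventual-periodicity of $x$ to all sufficiently nearby paths, including handling the pre-cycle segment $\alpha$ (a nearby path might agree with $x$ past $\alpha$ but one needs that it then has no choice but to run around $\lambda$), and then verify that the chosen $\gamma_i$ actually converge to $\gamma$ in the topology generated by the $Z(\alpha,\beta)$ — i.e., that eventually $\gamma_i \in Z(\alpha,\beta)$ for each basic neighborhood $Z(\alpha,\beta)$ of $\gamma$. This is really a statement that the basic neighborhoods of $\gamma$ pull back to eventually-containing neighborhoods, and it should follow from matching initial segments of length $\max(|\alpha|,|\beta|)$ together with the no-entrance rigidity, but it requires writing the cycle decomposition of $x$ carefully (choosing $\lambda$ to be a simple cycle and $\alpha$ of minimal length). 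I would also remark that amenability plus this computation is exactly what feeds into the Fell-topology hypothesis of the Muhly–Renault–Williams theorem used in the next section.
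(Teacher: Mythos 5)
The paper does not actually prove this statement---it is imported verbatim from \cite{Goehle2}---so there is no internal argument to compare yours against. Judged on its own terms, your direct Fell-topology argument is essentially correct, and it is a reasonable substitute for the citation; it is also close in spirit to the route the paper itself takes in the $k$-graph section, where continuity of isotropy is recast (via \cite{LalMil}) as openness of $\operatorname{Iso}(G)$, and openness at a nontrivial isotropy element $(x,m|\lambda|,x)$ with $x=\alpha\lambda^\infty$ amounts precisely to the cycle $\lambda$ having no entrance.

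Two corrections to the details. First, in the entrance direction, your claim that $x_n=\lambda^n e z$ has trivial isotropy ``for a suitable $z$'' is both unjustified and unnecessary: every infinite path emanating from $s(e)$ may be eventually periodic (for instance if the only vertices reachable behind $s(e)$ lie on another entranceless cycle), in which case $G(x_n)\cong\Z$. What actually defeats Fell condition (2) is that the single basic neighborhood $Z(\lambda,r(\lambda))$ of the generator $g=(x,|\lambda|,x)$ contains no isotropy element over any unit other than $x$: an element $(\lambda z,|\lambda|,z)$ of that set is isotropy only if $\lambda z=z$, forcing $z=\lambda^\infty=x$. Since the entrance guarantees $x_n\neq x$ while $x_n\to x$, no choice of $\gamma_n\in G(x_n)$, nor any subnet, can converge to $g$. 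Second, in the converse direction the bookkeeping you are worried about collapses entirely: if no cycle has an entrance and $x=\alpha\lambda^\infty$ with $\lambda=e_1\cdots e_m$, then each $r(e_k)$ receives only the edge $e_k$, so $Z(\alpha e_1)=\{x\}$ and $x$ is an isolated point of $E^\infty$; any net $x_i\to x$ is eventually constant and condition (2) is immediate, while condition (1) is free from continuity of $r$ and $s$, as you note. (Your closing remark about amenability is a red herring: amenability gives $C^*(G_E)=C^*_r(G_E)$ and has nothing to do with the Fell-topology hypothesis of \cite{MRW3}.)
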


\begin{definition}
A topological groupoid $G$ is \emph{proper} if the orbit map $\Phi_G:G \to G^{(0)} \times G^{(0)}$ given by $g \to (r(g),s(g))$ is proper (where the codomain is equipped with the relative product topology). 
\end{definition}

\begin{definition}
Let $G$ be a groupoid. Let $\pi_R: G \to G^{(0)} \times G^{(0)}$ be given by $\pi_R(g) = (r(g),s(g))$. Then the \emph{orbit groupoid} of $G$, denote by $R_G=R$, is the image of $\pi_R$, where the groupoid operations are 
\[
(u,v) (v,w)=(u,w)\]
\[
(u,v)^{-1}=(v,u). \]
The unit space of $R$ is identified with the unit space of $G$. The range and source maps are naturally identified with the projections onto the first and second factors. 
\end{definition}

\begin{definition} The topology on $R=R_G$ is the quotient topology induced by the above map $\pi_R: G \to R$. 
\end{definition}

\begin{remark} If the groupoid $G$ is principal, in the sense that $G(x)=\{x\}$ for every unit $x \in G^{(0)}$, then the map $\pi_R$ is a groupoid isomorphism. If $G$ is an arbitrary topological groupoid, it need not be the case that the quotient topology makes $R$ into a topological groupoid. However, for graph and $k$-graph groupoids (we will see these later on), this issue never arises and $R$ will always be a topological groupoid. The following commutative diagram serves to keep the relevant groupoids and spaces distinct. Note that as a set map, $\Phi_R$ is just an inclusion. However, $R$ carries a different topology from the product topology on $G^{(0)} \times G^{(0)}$, so we distinguish between the two. 
\end{remark} 

\begin{center}
\begin{tikzpicture}
\matrix (m) [matrix of math nodes, row sep=3em, column sep=4em, minimum width=2em] 
{
G &  \\
& G^{(0)} \times G^{(0)} \\
R & \\}; 
\path [-stealth]
(m-1-1) edge node [left] {$\pi$} (m-3-1)
            edge  node [above] {$\Phi_G$} (m-2-2)
 (m-3-1) edge node [below] {$\Phi_R$} (m-2-2); 
\end{tikzpicture}
\end{center}

\begin{remark}
In some sources (such as \cite{MRW3}) the orbit groupoid is denoted by $R=\mathcal{G}/\mathcal{A}$, to indicate that it is the quotient of a groupoid by the (in this case, abelian) isotropy bundle. By $\Sigma^{(0)}$ we denote the collection of closed subgroups of $G$, equipped with the Fell topology. 
\end{remark}

\begin{theorem}{\cite[Thm 1.1]{MRW3}}
Let $G$ be a second-countable locally compact Hausdorff groupoid with unit space $G^{(0)}$, abelian isotropy, and Haar system $\{\lambda^u\}_{u \in G^{(0)}}$. Then $C^*(G,\lambda)$ has continuous trace if and only if
\begin{enumerate}
\item the stabilizer map $u \mapsto G(u)$ is continuous from $G^{(0)}$ to $\Sigma^{(0)}$;
\item the action of $R$ on $G^{(0)}$ is proper. 
\end{enumerate}
\end{theorem}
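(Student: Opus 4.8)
The plan is to reduce the statement to a question about a transformation groupoid built from the dual of the isotropy bundle, where the two conditions become transparent. Write $\mathcal{A} = \operatorname{Iso}(G)$ for the isotropy bundle, a group bundle over $G^{(0)}$ with abelian fibers $G(u)$. Then $C^*(\mathcal{A})$ (with the Haar system restricted from $G$) is a commutative $C_0(G^{(0)})$-algebra — commutative because composability in $\mathcal{A}$ forces two elements to lie over the same unit, where the fiber $C^*(G(u))$ is commutative — so, applying Gelfand duality fiberwise, its spectrum is a locally compact Hausdorff space $\widehat{\mathcal{A}}$ mapping onto $G^{(0)}$ with $\widehat{\mathcal{A}}_u = \widehat{G(u)}$. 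Conjugation in $G$ induces an action of the orbit groupoid $R = G/\mathcal{A}$ on $\widehat{\mathcal{A}}$ covering the tautological $R$-action on $G^{(0)}$; this descends to $R$ precisely because conjugation by an isotropy element is trivial on an abelian group. I would then show, via a Fourier transform on $C^*(\mathcal{A}) = C_0(\widehat{\mathcal{A}})$ together with the groupoid equivalence machinery (an explicit imprimitivity bimodule obtained by completing $C_c(G)$), that $C^*(G)$ is Morita equivalent to a possibly twisted transformation-groupoid $C^*$-algebra $C^*(\widehat{\mathcal{A}} \rtimes R)$, the twist coming from the (possibly non-split) extension $\mathcal{A} \to G \to R$. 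Since continuous trace passes to Morita-equivalent algebras, it then suffices to analyze $\widehat{\mathcal{A}} \rtimes R$.

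The key observations are that $\widehat{\mathcal{A}} \rtimes R$ is automatically principal — its isotropy at a character $\chi$ would have to lie over the trivial isotropy of the principal groupoid $R$ — and that its $C^*$-algebra, twisted or not, has continuous trace if and only if the groupoid is proper, by the structure theory of proper principal groupoids (a twist only moves the Dixmier--Douady class and never obstructs continuity of trace). Thus the proof comes down to: $\widehat{\mathcal{A}} \rtimes R$ is proper $\iff$ $R$ acts properly on $\widehat{\mathcal{A}}$ $\iff$ conditions (1) and (2) hold. Here condition (1) enters through the theory of continuous fields: $C^*(\mathcal{A})$ is always an upper-semicontinuous $C_0(G^{(0)})$-algebra, and it is a \emph{continuous} field — equivalently the bundle map $\widehat{\mathcal{A}} \to G^{(0)}$ is open — exactly when the stabilizer map $u \mapsto G(u)$ is continuous into $\Sigma^{(0)}$ for the Fell topology. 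Given openness of $\widehat{\mathcal{A}} \to G^{(0)}$, properness of the $R$-action on the total space $\widehat{\mathcal{A}}$ is equivalent to properness of the $R$-action on the base $G^{(0)}$, which is condition (2).

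For the converse I would run these equivalences backwards: continuous trace of $C^*(G)$ forces $\widehat{C^*(G)}$ — hence, via the Morita equivalence and the identification of $\widehat{C^*(\widehat{\mathcal{A}} \rtimes R)}$ with the orbit space $\widehat{\mathcal{A}}/R$ — the space $\widehat{\mathcal{A}}/R$ to be locally compact Hausdorff, and moreover Fell's condition to hold; for a principal groupoid this upgrades Hausdorffness of the orbit space to properness of the groupoid, giving (2) after pushing down to $G^{(0)}$. Separately, if $C^*(\mathcal{A})$ failed to be a continuous field then $C^*(G)$ would inherit a non-Hausdorff piece of spectrum over the base, contradicting continuous trace, so (1) holds as well.

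I expect the principal obstacle to be the structural step of the first paragraph: constructing the Morita equivalence $C^*(G) \sim C^*(\widehat{\mathcal{A}} \rtimes R)$ in the stated generality. One must verify that $\widehat{\mathcal{A}} \rtimes R$ is a second-countable locally compact Hausdorff groupoid with a Haar system, pin down the twist produced by the extension $\mathcal{A} \to G \to R$, and handle the possibility that $\mathcal{A}$ is not closed in $G$ — here one likely has to observe that either hypothesis already forces $\mathcal{A}$ closed, so that $C^*(\mathcal{A})$ and everything downstream is well posed. By comparison the remaining steps are soft: the Fell-topology/continuous-field equivalence for (1), the transfer of properness between $\widehat{\mathcal{A}}$ and $G^{(0)}$ for (2), and the appeal to the characterization of continuous trace for principal proper groupoids are all fairly routine once the framework is in place.
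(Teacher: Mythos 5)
The paper does not prove this statement: it is quoted, with citation, from \cite{MRW3} and used as a black box, so there is no in-paper proof to compare against. Your sketch does track the actual strategy of the Muhly--Renault--Williams series --- dualize the abelian isotropy bundle $\mathcal{A}$, realize $C^*(G)$ up to Morita equivalence via a (possibly twisted) groupoid built from $\widehat{\mathcal{A}}$ and $R$, and invoke the characterization of continuous trace for proper principal groupoids --- so as a roadmap it is reasonable.

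Two steps, however, are genuine gaps rather than routine verifications. First, your framework presupposes condition (1): without continuity of $u \mapsto G(u)$ in the Fell topology, the dual bundle $\widehat{\mathcal{A}}$ need not be a well-behaved locally compact Hausdorff space over $G^{(0)}$ and the Morita equivalence is not available, so the necessity of (1) cannot be extracted from the transformation-groupoid picture. Your fallback --- that a discontinuous field would force a non-Hausdorff piece of spectrum in $C^*(G)$ --- is the right intuition, but it is itself one of the hard theorems in this circle of ideas (one must relate $\widehat{C^*(G)}$ to the orbit and stabilizer data \emph{without} assuming continuity of the stabilizers), not a soft observation. Second, the claimed equivalence ``$R$ acts properly on $\widehat{\mathcal{A}}$ if and only if $R$ acts properly on $G^{(0)}$, given openness of $\widehat{\mathcal{A}} \to G^{(0)}$'' does not follow from openness alone, since openness gives no control over preimages of compact sets. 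When the isotropy groups are discrete (the only case the present paper needs), the fibers $\widehat{G(u)}$ are compact, the bundle map is proper, and the transfer goes through; but in the stated generality of arbitrary abelian isotropy this step requires a genuine argument. A minor point in your favor: $\mathcal{A} = \operatorname{Iso}(G)$ is automatically closed in a Hausdorff $G$, being the preimage of the diagonal under $g \mapsto (r(g),s(g))$, so that particular worry can be dropped.
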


\section{Continuous-trace graph algebras}

The path groupoid of a directed graph $E$ is made of infinite paths, and the open sets are described by finite path prefixes. It is unsurprising then that the characterization of proper path groupoids is stated in terms of a certain finiteness condition on paths. \textbf{For this section, the standing assumption is that $E$ is a row-finite graph with no sources}. 

\begin{definition}
Let $v$ and $w$ be vertices in a directed graph $E$. An \emph{ancestry pair} for $v$ and $w$ is a pair of paths $(\lambda,\mu)$ such that $r(\lambda)=v$, $r(\mu)=w$, and $s(\lambda)=s(\mu)$. A \emph{minimal ancestry pair} is an ancestry pair $(\lambda,\mu)$ such that $(\lambda,\mu)=(\lambda' \nu, \mu' \nu)$ implies that $\nu=s(\lambda) = s(\mu)$. An ancestry pair$(\lambda,\mu)$ is \emph{cycle-free} if neither path contains a cycle. The graph $E$ has \emph{finite ancestry} if for every pair of vertices (not necessarily distinct) $v,v'$ has at most finitely many cycle-free minimal ancestry pairs. 
\end{definition}
\begin{remark}
Note that it is not necessary that any two vertices of $E$ have an ancestry pair in order for $E$ to have finite ancestry. 
\end{remark}

\begin{lemma}
Let $(\alpha,\beta)$ and $(\gamma,\delta)$ be two cycle-free minimal ancestry pairs. Then $\pi_R(Z(\alpha,\beta)) \cap \pi_R(Z(\gamma,\delta)) = \emptyset$ unless there is a simple cycle $\lambda$ and either factorizations $\alpha= \gamma \alpha'$, $\delta = \beta \delta'$ and $\lambda = \alpha' \delta'$ or factorizations $\gamma = \alpha \gamma'$, $\beta = \delta \beta'$ and $\lambda = \gamma' \beta'$. If $(\alpha,\beta)$ is a minimal cycle-free ancestry pair, and if no cycle of $E$ has an entrance, then there are finitely many minimal cycle-free ancestry pairs $(\gamma,\delta)$ such that $\pi_R(Z(\alpha,\beta)) \cap \pi_R(Z(\gamma,\delta)) \neq \emptyset$
\end{lemma}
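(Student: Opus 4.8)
The plan is to translate the statement into one about common tails of infinite paths, run a finite prefix-comparison argument to obtain the dichotomy, and then invoke the no-entrance hypothesis to bound the number of solutions. Since $\pi_R(x,n,y) = (x,y)$, we have $\pi_R(Z(\alpha,\beta)) = \{(\alpha z,\beta z) : z \in E^\infty,\ r(z) = s(\alpha)\}$ as a subset of $E^\infty \times E^\infty$; hence $\pi_R(Z(\alpha,\beta)) \cap \pi_R(Z(\gamma,\delta)) \neq \emptyset$ if and only if there exist $z,w \in E^\infty$ with $r(z) = s(\alpha)$, $r(w) = s(\gamma)$, $\alpha z = \gamma w$ and $\beta z = \delta w$. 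Everything that follows is an analysis of this pair of equations.

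For the dichotomy, suppose such $z,w$ exist. From $\alpha z = \gamma w$ one of $\alpha,\gamma$ is a prefix of the other, and since interchanging the two ancestry pairs swaps the two alternatives in the conclusion, I may assume $|\alpha| \geq |\gamma|$; then $\alpha = \gamma\alpha'$ and consequently $w = \alpha' z$. Substituting into $\beta z = \delta w$ gives $\beta z = \delta\alpha' z$, and I would then compare $\beta$ with $\delta\alpha'$ prefix-wise. If $\delta\alpha'$ is a proper prefix of $\beta$, say $\beta = \delta\alpha'\epsilon$ with $|\epsilon| \geq 1$, then $\epsilon z = z$ exhibits $\epsilon$ as a cycle contained in $\beta$, contradicting cycle-freeness of $(\alpha,\beta)$. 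If $|\delta| < |\beta| \leq |\delta\alpha'|$, then writing $\beta = \delta\beta''$ and $\alpha' = \beta''\alpha''$ one gets $z = \alpha'' z$, so $\alpha''$ is a cycle contained in $\alpha$ --- impossible unless $\alpha'' = s(\alpha)$, in which case $\beta = \delta\alpha'$ and minimality of $(\alpha,\beta)$ forces $\alpha' = s(\alpha)$, i.e.\ $(\gamma,\delta) = (\alpha,\beta)$. In the remaining case $|\beta| \leq |\delta|$ we get $\delta = \beta\delta'$, and $\beta z = \delta\alpha' z$ reduces to $z = \delta'\alpha' z$; if this word is trivial then $(\gamma,\delta) = (\alpha,\beta)$, and otherwise $\delta'\alpha'$ --- equivalently $\lambda := \alpha'\delta'$, using the ancestry relations $s(\gamma)=s(\delta)$ --- is a cycle at $r(\alpha') = s(\gamma)$, which is the first alternative. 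The case $|\gamma| \geq |\alpha|$ yields the second by the symmetric argument, and one checks the cycle obtained may be taken simple.

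For the finiteness statement, fix a minimal cycle-free ancestry pair $(\alpha,\beta)$ and assume no cycle of $E$ has an entrance. By the dichotomy any $(\gamma,\delta)$ meeting $(\alpha,\beta)$ either equals it or satisfies one of the two alternatives, so it suffices to bound the pairs of each kind. In the first alternative $\gamma$ is a prefix of $\alpha$, giving at most $|\alpha| + 1$ choices of $\gamma$, each of which determines $\alpha'$; I claim $\alpha'$ in turn determines $\delta$. This is where the hypothesis enters: if no cycle has an entrance, then every vertex lying on a cycle receives exactly one edge and that edge lies on the cycle, so through each such vertex there passes a \emph{unique} simple cycle $\lambda_0$ and every cycle based there is a power of $\lambda_0$. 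Since $\alpha'\delta'$ is a cycle through $r(\alpha')$ and $\alpha'$ is cycle-free, $\alpha'$ must be a proper prefix of $\lambda_0$; and since $\delta'$ is also cycle-free, $\alpha'\delta'$ cannot wrap around $\lambda_0$ more than once, whence $\alpha'\delta' = \lambda_0$ and $\delta'$ --- hence $\delta = \beta\delta'$ --- is determined by $\gamma$. The symmetric argument bounds the pairs from the second alternative by $|\beta| + 1$, so at most $|\alpha| + |\beta| + 2$ pairs $(\gamma,\delta)$ meet $(\alpha,\beta)$ in the $\pi_R$-picture; in particular there are finitely many.

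I expect the crux to be the structural fact used in the last paragraph: that no cycle having an entrance forces every cycle to be a power of a unique simple cycle through each of its vertices. This is precisely what collapses the a priori unbounded tail $\delta'$ (respectively $\gamma'$) to a single value and so yields finiteness, and it is also what makes the cycle in the dichotomy simple in the relevant cases. By contrast the dichotomy itself is a bounded prefix-comparison whose only delicate point is the repeated use of cycle-freeness and minimality to discard the degenerate configurations in which the two pairs coincide.
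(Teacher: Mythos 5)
Your proposal is correct and follows essentially the same route as the paper: identify $\pi_R(Z(\alpha,\beta))$ with the set of pairs $(\alpha z,\beta z)$, compare prefixes of the resulting common-tail equations, use cycle-freeness and minimality to exclude the degenerate configurations, and then invoke the uniqueness of the simple cycle through a vertex (when no cycle has an entrance) to bound the number of admissible factorizations. Your write-up is somewhat more explicit than the paper's --- notably in handling the case $(\gamma,\delta)=(\alpha,\beta)$ and in giving the bound $|\alpha|+|\beta|+2$ --- but the underlying argument is the same.
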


\begin{proof}
We have that $\pi_R(Z(\alpha,\beta)) = \{(\alpha z, \beta z) \} \subset E^\infty \times E^\infty$ and $\pi_R(Z(\gamma,\delta)) = \{ (\gamma w, \delta w) \}$. Suppose that $(\alpha z, \beta z) = (\gamma w, \delta w)$ as pairs of infinite paths. If $|\alpha| \geq |\gamma|$ and $|\beta| \geq |\delta|$, then $\alpha = \gamma \nu$ and $\beta = \delta \nu'$, where $\nu$ and $\nu'$ are initial segments of the infinite path $w$. If $\nu = \nu'$, this contradicts minimality of the ancestry pair $(\alpha,\beta)$. If $\nu \neq \nu'$, then (assuming WLOG that $\nu \subset \nu'$) we can note that $s(\nu) = s(\alpha)=s(\beta) = s(\nu')$, so that $\nu'$ contains a cycle, hence that $\beta$ contains a cycle, contradicting the assumption that $(\alpha,\beta)$ is a cycle-free ancestry pair. 

We must have WLOG that $|\alpha| > |\gamma|$ and $|\beta|< |\delta|$. We can factor $\alpha = \gamma \alpha'$ and $\delta = \beta \delta'$. Then $\lambda = \alpha' \delta'$ is a cycle with range/source equal to $s(\gamma)=s(\delta)$. This cycle must be simple because neither $\alpha' \subset \alpha$ nor $\delta' \subset \delta$ contains a cycle. This establishes the first claim in the lemma.

 If no cycle has an entry then any vertex is the source of at most one simple cycle. If $(\alpha,\beta)$ is a cycle-free minimal ancestry pair, and no cycle of $E$ has an entrance, then there is at most one simple cycle at $s(\alpha)=s(\beta)$. Thus there are only finitely many factorizations of the above form. 
\end{proof}

\begin{lemma}
Let $(\lambda,\mu)$ be an ancestry pair. Then there is a unique minimal ancestry pair $(\alpha,\beta)$ such that $Z(\lambda,\mu) \subset Z(\alpha,\beta)$ in $G_E$. If $(\lambda,\mu)$ is cycle-free, then so is $(\alpha,\beta)$. 
\end{lemma}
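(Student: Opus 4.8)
The plan is to start from the ancestry pair $(\lambda,\mu)$ and strip off a maximal common suffix. Concretely, I would look at all factorizations $(\lambda,\mu) = (\lambda'\nu,\mu'\nu)$ with $\nu \in E^*$; the trivial factorization $\nu = s(\lambda)$ always works, so this collection is nonempty, and since $|\nu| \le \min(|\lambda|,|\mu|)$ it is finite. Pick one with $|\nu|$ maximal and set $(\alpha,\beta) = (\lambda',\mu')$, so that $\lambda = \alpha\nu$ and $\mu = \beta\nu$. I claim $(\alpha,\beta)$ is an ancestry pair: clearly $r(\alpha) = r(\lambda) = v$, $r(\beta) = r(\mu) = w$, and $s(\alpha) = r(\nu) = s(\beta)$. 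And it is minimal: if $(\alpha,\beta) = (\alpha'\rho,\beta'\rho)$ then $(\lambda,\mu) = (\alpha'\rho\nu,\beta'\rho\nu)$, contradicting maximality of $|\nu|$ unless $\rho = s(\alpha)$.

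Next I would verify the containment $Z(\lambda,\mu) \subset Z(\alpha,\beta)$. An element of $Z(\lambda,\mu)$ has the form $(\lambda z, |\lambda|-|\mu|, \mu z)$ with $r(z) = s(\lambda)$. Writing $\lambda = \alpha\nu$, $\mu = \beta\nu$, this equals $(\alpha(\nu z), |\alpha|-|\beta|, \beta(\nu z))$, and $r(\nu z) = r(\nu) = s(\alpha)$, so it lies in $Z(\alpha,\beta)$; note $|\lambda| - |\mu| = |\alpha| - |\beta|$ since $\nu$ has the same length on both sides.

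For uniqueness, suppose $(\alpha_1,\beta_1)$ is another minimal ancestry pair with $Z(\lambda,\mu) \subset Z(\alpha_1,\beta_1)$. Since $Z(\lambda,\mu)$ is nonempty, $Z(\alpha,\beta) \cap Z(\alpha_1,\beta_1) \ne \emptyset$, and by the disjointness fact of \cite{KPRR} quoted in the excerpt we get $(\alpha,\beta) = (\alpha_1\epsilon,\beta_1\epsilon)$ or $(\alpha_1,\beta_1) = (\alpha\epsilon,\beta\epsilon)$ for some $\epsilon \in E^*$. In the first case minimality of $(\alpha,\beta)$ forces $\epsilon = s(\alpha)$; in the second, minimality of $(\alpha_1,\beta_1)$ forces $\epsilon = s(\alpha)$ as well. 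Either way $(\alpha,\beta) = (\alpha_1,\beta_1)$.

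Finally, the cycle-free claim: if $(\lambda,\mu)$ is cycle-free, then neither $\lambda = \alpha\nu$ nor $\mu = \beta\nu$ contains a cycle, so neither $\alpha$ nor $\beta$ (being initial subpaths of $\lambda$, $\mu$ respectively) contains a cycle, hence $(\alpha,\beta)$ is cycle-free. I expect the only mildly delicate point to be keeping the bookkeeping on lengths and ranges straight when translating between the "suffix-stripping" description and the basic-open-set containment; the disjointness lemma from \cite{KPRR} does all the real work for uniqueness, so there is no serious obstacle.
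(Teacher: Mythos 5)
Your proposal is correct and follows essentially the same route as the paper: strip the maximal common suffix to obtain the minimal pair, check the containment $Z(\lambda,\mu)\subset Z(\alpha,\beta)$ directly, and derive uniqueness from the quoted disjointness fact of \cite{KPRR} together with minimality. You have simply spelled out the bookkeeping (existence of the maximal suffix, the nonemptiness of $Z(\lambda,\mu)$ under the no-sources assumption, and the two factorization cases) that the paper's terser proof leaves implicit.
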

\begin{proof}
Let $\epsilon$ be maximal (with respect to length) such that $(\alpha, \beta) = (\alpha' \epsilon, \beta' \epsilon)$. Then $Z(\alpha,\beta) \subset Z(\alpha',\beta') \neq \emptyset$. As mentioned in \cite{KPRR}, $Z(\lambda,\mu) \cap Z(\gamma,\delta)$ implies that $(\lambda,\mu)$ factors as $(\gamma \epsilon, \delta \epsilon)$ or vice versa. Thus $Z(\alpha,\beta)$ is contained in $Z(\alpha',\beta')$ and not in $Z(\gamma,\delta)$ for any other minimal ancestry pair $(\gamma,\delta)$. The second claim follows from the construction. 
\end{proof}

\begin{lemma}
Let $E$ be a directed graph in which no cycle has an entrance, and let $(\alpha,\beta)$ be an ancestry pair such that $\alpha$ or $\beta$ contains a cycle. Then there is a cycle-free minimal ancestry pair $(\lambda,\mu)$ such that $\pi_R(Z(\lambda,\mu))$ contains $\pi_R(Z(\alpha,\beta))$. 
\end{lemma}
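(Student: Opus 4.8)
The plan is to exploit the rigidity that the hypothesis ``no cycle has an entrance'' imposes on $E$. Since $E$ is row-finite with no sources, every vertex that lies on a cycle receives exactly one edge; hence if an edge of some finite path $\mu = e_1 \cdots e_n$ has its range on a cycle, then all of $e_i, e_{i+1}, \dots, e_n$ lie on a single simple cycle and traverse it in order, and the only element of $E^\infty$ whose range is a cyclic vertex $v$ is $\lambda_v^\infty$, where $\lambda_v$ denotes the simple cycle based at $v$. Applying the first of these observations to the cyclic subpath sitting inside $\alpha$ or inside $\beta$ shows that the common terminal vertex $v := s(\alpha) = s(\beta)$ lies on a cycle; then, since the only infinite path with range $v$ is $\lambda_v^\infty$, the set $\pi_R(Z(\alpha,\beta))$ collapses to the single point $(\alpha\lambda_v^\infty,\ \beta\lambda_v^\infty)$ of $E^\infty \times E^\infty$.

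Next I would strip the cyclic tails off $\alpha$ and $\beta$. Write $\alpha = \alpha_1\alpha_2$, where $\alpha_2$ is the longest suffix of $\alpha$ every edge of which is an edge of $\lambda_v$ (possibly empty), and correspondingly $\beta = \beta_1\beta_2$. Using the structural facts above one checks that $\alpha_1$ is cycle-free, that the only vertex of $\alpha_1$ lying on $\lambda_v$ is the junction $w := s(\alpha_1)$, and that $\alpha_2$ runs consecutively around $\lambda_v$ from $w$ to $v$, so that $\alpha_2 = \lambda_w^{t}\rho$ with $\lambda_w$ the simple cycle based at $w$, $t \geq 0$, and $\rho$ the prefix of $\lambda_w$ joining $w$ to $v$ (empty when $w = v$, a proper prefix otherwise). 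Because $\rho\,\lambda_v^\infty = \lambda_w^\infty$, this gives $\alpha\,\lambda_v^\infty = \alpha_1\lambda_w^\infty$, and symmetrically $\beta\,\lambda_v^\infty = \beta_1\lambda_{w'}^\infty$ with $w' := s(\beta_1)$ and $\rho'$ the corresponding prefix of $\lambda_{w'}$.

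Now put $(\lambda_0,\mu_0) := (\alpha_1\rho,\ \beta_1\rho')$. Both entries terminate at $v$, so this is an ancestry pair; it is cycle-free, because the only cyclic vertex of $\alpha_1$ is the junction $w$, so no cycle can straddle the join of $\alpha_1$ and $\rho$, while $\rho$ itself---a proper prefix of a simple cycle---contains no cycle, and likewise for $\beta_1\rho'$. Moreover, using once more that the unique infinite path with range $v$ is $\lambda_v^\infty$,
\[
\pi_R(Z(\lambda_0,\mu_0)) = \{(\alpha_1\rho\,\lambda_v^\infty,\ \beta_1\rho'\,\lambda_v^\infty)\} = \{(\alpha_1\lambda_w^\infty,\ \beta_1\lambda_{w'}^\infty)\} = \{(\alpha\,\lambda_v^\infty,\ \beta\,\lambda_v^\infty)\} = \pi_R(Z(\alpha,\beta)).
\]
Finally, applying the preceding lemma (on the existence of a unique minimal ancestry pair lying above a given one) to $(\lambda_0,\mu_0)$ yields a minimal ancestry pair $(\lambda,\mu)$ with $Z(\lambda_0,\mu_0) \subseteq Z(\lambda,\mu)$ in $G_E$, still cycle-free since $(\lambda_0,\mu_0)$ is; hence $\pi_R(Z(\lambda,\mu)) \supseteq \pi_R(Z(\lambda_0,\mu_0)) = \pi_R(Z(\alpha,\beta))$, as desired.

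The step I expect to be the real work is the middle paragraph: showing that $\alpha_1$ is cycle-free and that its only vertex on $\lambda_v$ is the junction $w$, so that $\alpha_1\rho$ is again cycle-free. This forces the split of $\alpha$ to be taken at the \emph{earliest} index from which the remaining edges all lie on $\lambda_v$, and then the rigidity ``a path that enters $\lambda_v$ never leaves it'' must be invoked carefully: the obstruction to avoid is a subpath of $\lambda_v$ that starts inside $\alpha_1$ and wraps forward through the tail $\alpha_2$, which would simultaneously plant an extra cyclic vertex in the interior of $\alpha_1$ and destroy cycle-freeness of $\alpha_1\rho$; ruling it out is exactly where the minimality of the chosen split index is used.
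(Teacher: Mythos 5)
Your proof is correct and follows essentially the same route as the paper's: use the rigidity forced by ``no cycle has an entrance'' to strip the cyclic tail off $\alpha$ (and $\beta$), note that $\pi_R(Z(\cdot,\cdot))$ is unchanged because the only infinite path based at the cyclic vertex $s(\alpha)$ is the periodic one, and then pass to the unique cycle-free minimal pair above the stripped pair via the preceding lemma. Your middle paragraph supplies a verification (that $\alpha_1\rho$ is genuinely cycle-free and that no cycle straddles the junction) which the paper's proof asserts without detail, so if anything your write-up is the more careful of the two.
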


\begin{proof}
Suppose that $\alpha$ contains a cycle, so that $\alpha = \alpha' \lambda \lambda'$, where $\alpha'$ does not contain a cycle, $\lambda$ is a cycle, and $\lambda'$ is an initial segment of $\lambda$. Let $\lambda = \lambda' \lambda''$. Note that $\alpha$ must factor in this way because of the condition that no cycle has an entrance. Then $\pi_R(Z(\alpha,\beta)) = \{(\alpha x, \beta x) : x \in E^\infty, r(x) = s(\alpha) \}$. Again by the condition that no cycle has an entrance, we have that the only infinite path with range equal to $s(\alpha)$ is $\lambda'' (\lambda^\infty)$. Thus $\pi_R(Z(\alpha,\beta))= \pi_R(Z(\alpha' \lambda', \beta))$. We can perform a similar operation to get rid of any cycles from $\beta$, obtaining a cycle free ancestry pair $(\alpha', \beta')$ such that $\pi_R(Z(\alpha,\beta)) = \pi_R(Z(\alpha',\beta'))$. By the previous lemma we can find a cycle-free minimal ancestry pair $(\lambda,\mu)$ such that $Z(\alpha',\beta') \subset Z(\lambda,\mu)$. The lemma is established on applying $\pi_R$ to this containment. 
\end{proof}

\begin{theorem} \label{main-guy}
Let $E$ be a row-finite directed graph with no sources. Then $C^*(E)$ has continuous trace if and only if both 
\begin{enumerate}
\item no cycle of $E$ has an entrance, and
\item $E$ has finite ancestry. 
\end{enumerate}
\end{theorem}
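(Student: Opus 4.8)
The plan is to apply the main result of \cite{MRW3} (the theorem quoted above) to the path groupoid $G_E$, which is second-countable, locally compact, Hausdorff, étale, and has abelian (indeed trivial-or-infinite-cyclic) isotropy by the Remark following the definition of $G_E$. Since $C^*(E) \cong C^*(G_E)$, it suffices to show that $G_E$ has continuous trace if and only if conditions (1) and (2) hold. Condition (1) of \cite[Thm 1.1]{MRW3} — continuity of the stabilizer map $u \mapsto G_E(u)$ — is exactly ``no cycle has an entrance'' by the cited result of \cite{Goehle2}. So the whole content is to show that, \emph{given} that no cycle of $E$ has an entrance, the action of $R_{G_E}$ on $E^\infty$ is proper if and only if $E$ has finite ancestry. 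This reduces the theorem to a single equivalence about properness.

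To handle properness, I would first unwind what properness of the $R$-action means concretely. The relevant map is $\Phi_R \colon R_{G_E} \to E^\infty \times E^\infty$, and properness says that preimages of compact sets are compact. Basic compact sets in $E^\infty \times E^\infty$ are finite unions of products $Z(\alpha) \times Z(\beta)$, and $\Phi_R^{-1}(Z(\alpha) \times Z(\beta))$ is covered by the images $\pi_R(Z(\gamma,\delta))$ for ancestry pairs $(\gamma,\delta)$ that ``fit inside'' $Z(\alpha)\times Z(\beta)$ in the appropriate sense. The three preparatory lemmas are tailored for exactly this: Lemma (the first one) says that a fixed cycle-free minimal ancestry pair meets only finitely many other cycle-free minimal ancestry pairs under $\pi_R$ when no cycle has an entrance; the second lemma says every ancestry pair sits inside a unique minimal one; the third says that any ancestry pair involving a cycle has its $\pi_R$-image swallowed by that of a cycle-free minimal ancestry pair. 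Together these say that, modulo the ``no entrance'' hypothesis, the sets $\pi_R(Z(\lambda,\mu))$ for $(\lambda,\mu)$ cycle-free minimal form an essentially locally finite cover of $R_{G_E}$ if and only if finitely many of them meet any given $Z(\alpha)\times Z(\beta)$ — and this last condition is precisely finite ancestry.

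So the argument has two directions. For the forward direction (continuous trace $\Rightarrow$ (1) and (2)): (1) is immediate from \cite{Goehle2} via \cite[Thm 1.1(1)]{MRW3}. For (2), suppose $E$ fails finite ancestry, so there are vertices $v, v'$ with infinitely many distinct cycle-free minimal ancestry pairs $(\gamma_n,\delta_n)$. I would show these produce points in $\Phi_R^{-1}(Z(v)\times Z(v'))$ with no convergent subnet: pick representative points, and use the first lemma (now known to hold) to argue that infinitely many of the $\pi_R(Z(\gamma_n,\delta_n))$ are pairwise disjoint, so any limit point of chosen representatives would have to lie in infinitely many of these disjoint open sets — contradiction, so the preimage of the compact set $Z(v)\times Z(v')$ is not compact, and the action is not proper, so by \cite[Thm 1.1(2)]{MRW3} $C^*(E)$ does not have continuous trace. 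For the reverse direction ((1) and (2) $\Rightarrow$ continuous trace): given a compact $K \subset E^\infty \times E^\infty$, cover it by finitely many $Z(\alpha_i)\times Z(\beta_i)$; by the second and third lemmas, $\Phi_R^{-1}(Z(\alpha_i)\times Z(\beta_i))$ is a finite union of the compact-open sets $\pi_R(Z(\lambda,\mu))$ for cycle-free minimal $(\lambda,\mu)$ — finiteness of the collection being exactly finite ancestry, after using the first lemma to control overlaps — hence $\Phi_R^{-1}(K)$ is closed inside a compact set, so compact; thus the action is proper, and \cite[Thm 1.1]{MRW3} gives continuous trace.

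The main obstacle I anticipate is the bookkeeping in the reverse direction: showing that only finitely many cycle-free minimal ancestry pairs $(\lambda,\mu)$ have $\pi_R(Z(\lambda,\mu))$ meeting a fixed $Z(\alpha)\times Z(\beta)$, and that together with cyclic pairs (absorbed by the third lemma) they genuinely exhaust $\Phi_R^{-1}(Z(\alpha)\times Z(\beta))$. One must be careful that a point of $R_{G_E}$ lying over $(\alpha z, \beta w)$ need not come from an ancestry pair with prefixes $\alpha$ and $\beta$ exactly — the shift data $(x,n,y)$ allows the ``overlap'' to occur further out — so the reduction to minimal ancestry pairs via the second and third lemmas has to be invoked with some care, and one needs the ``no entrance'' hypothesis throughout to keep the relevant infinite-path tails unique. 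I also need to confirm the quotient topology on $R_{G_E}$ makes it a topological groupoid and that $\pi_R$ is open (so the $\pi_R(Z(\lambda,\mu))$ are open), which is asserted in the Remark for graph groupoids; I would either cite this or give the short direct verification that $\pi_R$ is open on basic sets.
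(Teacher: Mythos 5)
Your proposal is correct and follows essentially the same route as the paper: reduce via \cite[Thm.\ 1.1]{MRW3} and \cite{Goehle2} to properness of $\Phi_R$, compute $\Phi_R^{-1}(Z(v)\times Z(w))$ as a union of $\pi_R(Z(\lambda,\mu))$ over minimal ancestry pairs, absorb the cyclic pairs into cycle-free ones via the third lemma, and use the first lemma to rule out infinitely many cycle-free minimal pairs when the action is proper. Your necessity argument (extracting an infinite pairwise-disjoint subfamily and contradicting compactness) is only a cosmetic variant of the paper's finite-subcover argument, and your remark that one must check $\pi_R$ is open on basic sets is a point the paper uses implicitly.
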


\begin{proof}
Suppose no cycle of $E$ has an entrance and $E$ has finite ancestry. We show that $\Phi_R$ is proper. The collection of sets of the form $Z(v) \times Z(w) \subset E^\infty \times E^\infty$ form a compact-open cover for $E^\infty \times E^\infty$. Thus it suffices to prove that $\Phi_R^{-1}(Z(v) \times Z(w))$ is compact for any vertices $v$ and $w$. It is not hard to see that \[ \Phi_G^{-1}(Z(v) \times Z(w)) = \bigcup_{(\alpha,\beta) \in \mathcal{M}} Z(\alpha,\beta),\] where $\mathcal{M}$ is the set of all minimal ancestry pairs for $v$ and $w$. We can partition these pairs into two families: $\mathcal{C}$, the set of minimal ancestry pairs for $v$ and $w$ containing cycles and $\mathcal{D}$, the set of cycle-free minimal ancestry pairs. 

By definition, we have that $\Phi_R^{-1}(Z(v) \times Z(w)) = \pi_R( \Phi^{-1}(Z(v) \times Z(w)))$. Thus we can write 
\[
\pi_R(  \Phi^{-1}(Z(v) \times Z(w)) ) = \pi_R\left( \bigcup_{(\alpha,\beta) \in \mathcal{C}} Z(\alpha,\beta) \right) \cup \pi_R \left( \bigcup_{(\lambda,\mu) \in \mathcal{D}} Z(\lambda,\mu) \right) .\]

But for each $(\alpha,\beta) \in \mathcal{C}$, we have that $\pi_R(Z(\alpha,\beta)) \subset \pi_R(Z(\lambda,\mu))$ for some $(\lambda,\mu) \in \mathcal{D}$. Thus $\Phi_R^{-1}(Z(v) \times Z(w)) = \cup_{(\lambda,\mu) \in \mathcal{D}} \pi_R(Z(\lambda,\mu))$. Each $\pi_R(Z(\lambda,\mu))$ is compact by the continuity of $\pi_R$ and compactness of $Z(\lambda,\mu)$, and $\mathcal{D}$ is finite by the assumption that $E$ has finite ancestry. Thus $\Phi_R$ is a proper map and the $C^*$-algebra has continuous trace by \cite{MRW3}. 

Now suppose that $C^*(E)$ has continuous trace. Then the isotropy groups of $G$ must vary continuously so no cycle of $E$ has an entrance. So we only need to show that $E$ has finite ancestry. Suppose that $v$ and $w$ are two vertices and let $\{(\alpha_k,\beta_k): k \in \mathcal{A} \}$ be an enumeration of the cycle-free minimal ancestry pairs for $v$ and $w$. As in the proof of sufficiency, we can write $\Phi_R^{-1}(Z(v) \times Z(w)) = \cup_k \pi_R(Z(\alpha_k, \beta_k))$. By properness of $\Phi_R$ we must be able to extract a finite subcover. However for any finite subset $B \subset A$, the set $B'=\{k \in A: \pi_R(Z(\alpha_k,\beta_k)) \cap (\cup_{j \in B} \pi_R(Z(\alpha_j,\beta_j))) \neq \emptyset \}$ is finite by Lemma 1. This implies that $A$ is finite. Thus $E$ has finite ancestry. 
\end{proof}

\section{Arbitrary graphs}

The previous theorem is given only in the context of row-finite graphs with no sources. In this section we will remove the requirement that all graphs be row-finite and have no sources, by use of the Drinen-Tomforde desingularization. 

\begin{definition}
A \emph{tail} at the vertex $v$ is an infinite path with range $v$. 
\end{definition}

Briefly, the Drinen-Tomforde desingularization adds a tail to each singular vertex. If the singular vertex $v$ is an infinite receiver, it takes all the edges with range $v$ and redirects each to a different vertex on the infinite tail. This produces a new graph $\tilde{E}$ which has no singular vertices. For details, see \cite{DrinTom} or \cite{Raeburn}. Note that we have reversed the edge orientation of \cite{DrinTom}, to fit with the higher-rank graphs considered in the next section. 

\begin{theorem}{\cite[Thm 2.11]{DrinTom}}
Let $E$ be an (arbitrary) directed graph. Let $\tilde{E}$ be a desingularization for $E$. Then $C^*(E)$ embeds in $C^*(\tilde{E})$ as a full corner, so that $C^*(E)$ is Morita equivalent to $C^*(\tilde{E})$. 
\end{theorem}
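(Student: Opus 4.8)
This is a standard result of Drinen and Tomforde; I sketch the approach one takes. Write $\{p_v,s_e\}$ for the canonical Cuntz--Krieger $E$-family generating $C^*(E)$ and $\{q_w,t_f\}$ for the generating family of $C^*(\tilde{E})$. Recall that $\tilde{E}$ is built from $E$ by attaching to each singular vertex $v$ an infinite tail with vertices $v=v_0,v_1,v_2,\dots$ and spine edges $f_1,f_2,\dots$ with $r(f_j)=v_{j-1}$ and $s(f_j)=v_j$; if moreover $v$ is an infinite receiver, the edges $e_1,e_2,\dots$ of $E$ with range $v$ are deleted and replaced by edges $g_1,g_2,\dots$ with $r(g_i)=v_{i-1}$ and $s(g_i)=s(e_i)$. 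The plan is to identify $C^*(E)$ with the corner $P\,C^*(\tilde{E})\,P$, where $P:=\sum_{v\in E^0}q_v$ is the (strictly convergent) projection in the multiplier algebra $M(C^*(\tilde{E}))$ determined by the original vertex set.

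First I would produce a Cuntz--Krieger $E$-family inside $P\,C^*(\tilde{E})\,P$. Set $Q_v:=q_v$ for $v\in E^0$; set $T_e:=t_e$ for any edge $e$ whose range is not an infinite receiver (such an $e$ is unchanged in $\tilde{E}$); and set $T_{e_i}:=t_{f_1}t_{f_2}\cdots t_{f_{i-1}}t_{g_i}$ when $r(e_i)=v$ is an infinite receiver, that is $T_{e_i}=t_{\mu_i}$ where $\mu_i$ is the path running from $s(e_i)$ up the tail at $v$ and into $v$. A direct check of the Cuntz--Krieger relations of $\tilde{E}$ — together with the observation that regular vertices of $E$ are untouched by the desingularization, so their incoming edges in $\tilde{E}$ coincide with their incoming edges in $E$, which handles relation (4) — shows $\{Q_v,T_e\}$ is a Cuntz--Krieger $E$-family. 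The universal property of $C^*(E)$ then gives a $*$-homomorphism $\phi:C^*(E)\to P\,C^*(\tilde{E})\,P$ with $\phi(p_v)=Q_v$ and $\phi(s_e)=T_e$.

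Injectivity of $\phi$ I would deduce from the gauge-invariant uniqueness theorem, since $\phi(p_v)=q_v\neq 0$ for all $v$: the assignment $t_e\mapsto zt_e$ on edges $e$ surviving from $E$ and on each redirect edge $g_i$, together with $t_{f_j}\mapsto t_{f_j}$ on spine edges and $q_w\mapsto q_w$, respects the relations of $\tilde{E}$ and so defines an action $\beta$ of $\T$ on $C^*(\tilde{E})$ with respect to which $\phi$ intertwines the canonical gauge action on $C^*(E)$ with $\beta$. For surjectivity onto the corner I would use that $P\,C^*(\tilde{E})\,P$ is the closed linear span of the elements $t_\alpha t_\beta^*$ with $r(\alpha),r(\beta)\in E^0$ and $s(\alpha)=s(\beta)$; analyzing such paths — the only edge $f$ of $\tilde{E}$ with $s(f)=v_j$ (for $j\ge 1$) is $f_j$, so a path with range in $E^0$ that visits a tail either leaves it through a redirect edge, contributing a segment $\mu_i$, or terminates on the tail along a run $f_1f_2\cdots f_j$ — combined with the telescoping identity $t_{f_1\cdots f_j}t_{f_1\cdots f_j}^*=\phi\bigl(p_v-\sum_{i=1}^{j}s_{e_i}s_{e_i}^*\bigr)$, shows each such spanning element lies in $\phi(C^*(E))$. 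Hence $\phi$ is an isomorphism onto $P\,C^*(\tilde{E})\,P$.

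It remains to see that the corner is full. Given any vertex $w$ of $\tilde{E}$, choose a path $\gamma$ with $s(\gamma)=w$ and $r(\gamma)\in E^0$ (running down the tail to its base when $w$ lies on a tail); then $q_w=t_\gamma^*t_\gamma$ is Murray--von Neumann equivalent to $t_\gamma t_\gamma^*\le q_{r(\gamma)}\le P$, so $q_w$ belongs to the ideal generated by $P$. Since the vertex projections generate $C^*(\tilde{E})$ as an ideal, $P$ is full, and a full corner is Morita equivalent to the ambient algebra. The argument is almost entirely bookkeeping; the one step that genuinely needs care is surjectivity — verifying the span description of the corner and checking that the projections carried by the tails are recovered inside $\phi(C^*(E))$ via the telescoping identity — while the rest amounts to a routine verification of relations and of equivariance.
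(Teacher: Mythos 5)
The paper does not prove this statement; it is quoted verbatim (with reversed edge orientation) from Drinen--Tomforde, so there is no internal proof to compare against. Your sketch is the standard argument from that source --- realizing a Cuntz--Krieger $E$-family in the corner $P\,C^*(\tilde{E})\,P$, getting injectivity from the gauge-invariant uniqueness theorem via a gauge action fixing the spine edges, surjectivity from the telescoping identity $t_{f_1\cdots f_j}t_{f_1\cdots f_j}^*=\phi\bigl(p_v-\sum_{i\le j}s_{e_i}s_{e_i}^*\bigr)$, and fullness by running each tail vertex down to its base --- and it is correct as stated.
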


The basic technical lemma needed is a bijection between finite paths in a singular graph and certain finite paths in its desingularization. (We've omitted the part about infinite paths.)
\begin{lemma}{\cite[Lemma 2.6]{DrinTom}}
Let $E$ be a directed graph and let $\tilde{E}$ be a desingularization. Then there is a bijection \[ 
\phi: E^* \to \{ \beta \in \tilde{E}^*: s(\beta),r(\beta) \in E^0\}.\] The map $\phi$ preserves source and range. 
\end{lemma}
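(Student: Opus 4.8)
The plan is to read $\phi$ off directly from the structure of the desingularization and then invert it by parsing a path of $\tilde E$ into blocks. Fix notation: at each singular vertex $v$ of $E$, desingularization attaches a tail $v = v_0, v_1, v_2, \dots$ with edges $f^v_1, f^v_2, \dots$ satisfying $r(f^v_j) = v_{j-1}$ and $s(f^v_j) = v_j$, and, if moreover $v$ is an infinite receiver with $r^{-1}(v) = \{e_1, e_2, \dots\}$ in $E$, each $e_k$ is replaced in $\tilde E$ by an edge $g^v_k$ with $s(g^v_k) = s(e_k)$ and $r(g^v_k) = v_{k-1}$. The one structural fact on which the whole argument rests: each new vertex $v_j$ with $j \ge 1$ is the source of exactly one edge, $f^v_j$, and is the range of exactly $f^v_{j+1}$ together with $g^v_{j+1}$ when $v$ is an infinite receiver (just $f^v_{j+1}$ when $v$ is a source of $E$).

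I would then define $\phi$ blockwise. Given $\mu = \mu_1 \cdots \mu_n \in E^*$ with each $\mu_i \in E^1$, replace $\mu_i$ by a path $\widehat{\mu_i} \in \tilde E^*$ running from $r(\mu_i)$ to $s(\mu_i)$: take $\widehat{\mu_i} = \mu_i$ if $r(\mu_i)$ is regular in $E$ (so $\mu_i$ survives in $\tilde E$), and $\widehat{\mu_i} = f^v_1 \cdots f^v_{k-1} g^v_k$ if $r(\mu_i) = v$ is an infinite receiver with $\mu_i = e_k$ (this is the single edge $g^v_1$ when $k = 1$). Set $\phi(\mu) = \widehat{\mu_1} \cdots \widehat{\mu_n}$, and $\phi(v) = v$ for $v \in E^0$. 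Since $s(\widehat{\mu_i}) = s(\mu_i) = r(\mu_{i+1}) = r(\widehat{\mu_{i+1}})$, this is a genuine path, with $r(\phi(\mu)) = r(\mu) \in E^0$ and $s(\phi(\mu)) = s(\mu) \in E^0$; so $\phi$ maps into the stated codomain and preserves source and range.

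For bijectivity I would construct the inverse. Take $\beta = \beta_1 \cdots \beta_m \in \tilde E^*$ with $r(\beta), s(\beta) \in E^0$, and let $w_0 = r(\beta), w_1, \dots, w_m = s(\beta)$ be the vertices traversed, so $\beta_i$ has range $w_{i-1}$ and source $w_i$. Whenever $w_i$ is a new vertex $v_j$ ($j \ge 1$), the structural fact forces $\beta_i = f^v_j$ and $w_{i-1} = v_{j-1}$; combined with $\beta$ being finite (so it cannot run down a tail forever) and $w_m \in E^0$ (so it cannot stop inside a tail), this forces $\beta$ to split uniquely into consecutive blocks between its visits to $E^0$, each block being either a single edge with both endpoints in $E^0$ — an original edge at a regular vertex, or some $g^v_1$ — or a path $f^v_1 \cdots f^v_{k-1} g^v_k$ with $k \ge 2$; in particular, tails attached to sources of $E$ are never entered, since they carry no redirected edges. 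Each block equals $\widehat{\mu}$ for a unique $\mu \in E^1$, and stringing these together yields the unique $\mu \in E^*$ with $\phi(\mu) = \beta$, giving injectivity and surjectivity at once.

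The step I expect to be the real obstacle is exactly this parsing: showing that the local structure at the new vertices makes the block decomposition of an arbitrary $\beta$ forced, and in particular that a finite path with endpoints in $E^0$ can never be trapped inside a tail. Once that rigidity is in hand, well-definedness of $\phi$, both directions of bijectivity, and preservation of source and range are routine bookkeeping; and since this is precisely \cite[Lemma 2.6]{DrinTom} modulo the reversal of edge orientation already noted, one may alternatively just cite it.
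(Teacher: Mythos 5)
The paper gives no proof of this lemma --- it is imported verbatim as \cite[Lemma 2.6]{DrinTom} --- and your blockwise construction of $\phi$ and its inverse is correct and is essentially the argument given there, modulo the edge-orientation reversal the paper flags. In particular you correctly isolate the one point that makes the parsing forced (each tail vertex $v_j$, $j\ge 1$, emits exactly one edge $f^v_j$, and tails at sources of $E$ carry no redirected edges so a finite path with source in $E^0$ never enters them), so there is nothing to fix.
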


\begin{lemma}
Let $E$ be a directed graph and let $\tilde{E}$ be a desingularization for $E$. Then no cycle of $E$ has an entrance if and only if no cycle of $\tilde{E}$ has an entrance. 
\end{lemma}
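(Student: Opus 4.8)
The plan is to exploit the structural description of the Drinen--Tomforde desingularization, together with the path bijection $\phi\colon E^* \to \{\beta \in \tilde{E}^* : s(\beta), r(\beta) \in E^0\}$ from Lemma 4.6, to transfer cycles and entrances back and forth between $E$ and $\tilde{E}$.

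First I would observe that cycles in $\tilde{E}$ are constrained: the added tails are acyclic (each tail is an infinite ray with no return edges), and the redirected edges emanating from a tail vertex all point ``outward'' toward $E^0$. Consequently, any cycle $\lambda$ in $\tilde{E}$ must pass only through vertices whose incident cyclic structure lies in $E^0$; more precisely, a cycle in $\tilde{E}$ cannot traverse a tail vertex, since a tail vertex has out-degree one leading strictly away along the tail and its in-edges cannot be reached from anything it points to. So every simple cycle of $\tilde{E}$ lies entirely in the ``old'' part of the graph and has both range and source in $E^0$; hence it is of the form $\phi(\lambda)$ for a unique cycle $\lambda$ in $E$, and $\phi$ gives a bijection between simple cycles of $E$ and simple cycles of $\tilde{E}$.

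Next I would analyze entrances. Suppose $\lambda = e_1 \cdots e_n$ is a cycle in $E$ with an entrance, i.e.\ some edge $e \in E^1$ with $r(e) = r(e_k)$ but $e \neq e_k$. I want to produce an entrance to $\phi(\lambda)$ in $\tilde{E}$. If $r(e_k)$ is a regular vertex, then $e$ survives in $\tilde{E}$ and directly furnishes an entrance to $\phi(\lambda)$ at the same vertex. If $r(e_k)$ is an infinite receiver, the edge $e$ gets redirected in the desingularization, but I can still locate an entrance: the first edge of $\phi(\lambda)$ is an edge with range $r(e_k) \in \tilde{E}^0$, and because $r(e_k)$ receives at least two edges in $E$ (namely $e_k$ and $e$), after desingularization $r(e_k)$ still receives at least one edge besides the one beginning $\phi(\lambda)$ — either another original edge or the last edge of the tail — giving an entrance. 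Conversely, if $\phi(\lambda)$ has an entrance $\tilde e$ in $\tilde{E}$ at vertex $r(\phi(\lambda_k)) \in E^0$, I pull back: $\tilde e$ either is an edge of $E$ (if it originates in $E^0$) or is the terminal edge of a tail attached to some infinite receiver $w$, in which case some $E$-edge with range $w$ was redirected, and that edge is an entrance to $\lambda$ in $E$. Either way, no cycle of one graph has an entrance iff the same holds for the other.

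The main obstacle I expect is the bookkeeping around infinite receivers: one must be careful that the edge exhibiting an entrance in $\tilde{E}$ is genuinely distinct from the edge of $\phi(\lambda)$ it shadows, and that pulling an entrance back through the redirection map lands on an honest entrance in $E$ (not merely on $e_k$ itself). This requires pinning down exactly how $\phi$ acts on the first edge of a cycle when the range vertex is an infinite receiver — specifically, that $\phi$ routes $\lambda$ through the tail in a prescribed way, so that ``leftover'' in-edges at $r(e_k)$ really do constitute entrances. The acyclicity-of-tails argument in the second paragraph is the load-bearing structural fact, and once it is cleanly stated the entrance correspondence is a finite case check.
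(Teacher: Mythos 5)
There is a genuine gap in your second paragraph. You assert, as an unconditional structural fact about the desingularization, that no cycle of $\tilde{E}$ can traverse a tail vertex, and you use this to claim that $\phi$ restricts to a bijection between simple cycles of $E$ and simple cycles of $\tilde{E}$ lying entirely in the old part of the graph. This is false. Take $E$ to be a single vertex $v$ with infinitely many loops $g_1,g_2,\dots$ at $v$. In $\tilde{E}$ the loops are redirected so that $g_j$ has range the tail vertex $v_j$, while its source remains $v$; concatenating the redirected $g_j$ with the tail path from $v_j$ back to $v$ produces a cycle of $\tilde{E}$ that passes through the tail vertices $v_1,\dots,v_j$. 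Your heuristic (``its in-edges cannot be reached from anything it points to'') fails precisely because a tail vertex receives redirected edges whose sources lie in $E^0$, and $E^0$ is reachable from the tail. The correct statement — that cycles of $\tilde{E}$ avoid the tails — holds only \emph{under the hypothesis that no cycle of $E$ has an entrance}, and it has to be proved, not observed: given a cycle $\mu$ of $\tilde{E}$ with source on a tail, one rotates it to a cycle $\mu'$ based at the vertex of $E^0$ to which the tail is attached, writes $\mu'=\phi(\lambda)$ for a cycle $\lambda$ of $E$, and notes that $\lambda$ would pass through an infinite receiver and hence have an entrance, a contradiction. This is exactly the argument the paper supplies, and without it (or something equivalent) your ``conversely'' direction does not get off the ground.

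A smaller bookkeeping error: in the forward direction, when $r(e_k)$ is an infinite receiver you look for the entrance to $\phi(\lambda)$ at the vertex $r(e_k)$ itself. But after desingularization $r(e_k)$ receives only the first tail edge, which is part of $\phi(\lambda)$; the entrance actually appears further down the tail, at the vertex $v_j$ to which $e_k$ is redirected (the next tail edge $f_{j+1}$ has range $v_j$ and is not an edge of $\phi(\lambda)$), or at the image of the redirected entrance $e$. The conclusion is right, but the location is not, and since this is the case you flagged as the delicate one, it needs to be stated correctly. Note that in this situation the forward direction is moot anyway: if $\lambda$ passes through an infinite receiver then $\lambda$ already has an entrance in $E$, so for the contrapositive one only ever needs the regular-vertex case, which is the observation that makes the paper's proof of this direction short.
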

\begin{proof}
Suppose that no cycle of $\tilde{E}$ has an entrance. Let $\lambda = e_1 \ldots e_n$ be a cycle in $E$ and let $\tilde{\lambda} = \phi(\lambda) = f_1 \ldots f_m$ be the corresponding path in $\tilde{E}$, with $r(\tilde{\lambda})=r(\lambda)$ and $s(\tilde{\lambda}) = s(\lambda)$. Suppose that $e$ is an edge in $E$ with $r(e)=r(e_k)$ and yet $e \neq e_k$. Then $\tilde{e} = \phi(e)$ is a path in $\tilde{E}$ with $r(\tilde{e}) = r(\phi(e_k))$ and yet $\tilde{e} \neq \tilde{e}_k$ (here we are using the fact that $\phi$ is a bijection). Thus $\tilde{e}$ is an entrance to the cycle $\tilde{\lambda}$. 

Suppose that no cycle of $E$ has an entrance, and let $\mu = f_1 f_2 \ldots f_n$ be a cycle in $\tilde{E}$. If $s(\mu)$ belongs to $E^0$, then $\phi^{-1}(\mu)$ is a cycle in $E$. Furthermore, we know that no vertex of $E^0$ on $\phi^{-1}(\mu)$ can be singular, because then the cycle $\phi^{-1}(\mu)$ would have an entrance. Thus $\mu$ consists solely of edges in $E$ and does not meet any singular vertices or tails. The only edges in $\tilde{E}$ that meet $\mu$ are images under $\phi$ of edges from $E$, and we know that $\mu$ has no entrances in $E$, so $\mu$ has no entrances. 

Now we show that, under the assumption that no cycle of $E$ has an entrance, no cycle of $\tilde{E}$ can have source on an infinite tail. Suppose that $\mu$ is a cycle in $\tilde{E}$ with source on an infinite tail. Because no infinite tail contains a cycle, we can write $\mu = f_1 \ldots f_k d_1 \ldots d_j$, where $d_1 \ldots d_j$ is the largest path in the infinite tail containing $s(\mu)$ such that $d_1 \ldots d_j$ is contained in $ \mu$. Then $r(d_1)$ must be the vertex to which the infinite tail is attached, i.e. $r(d_1) \in E^0$. Consider the cycle $\mu' = d_1 \ldots d_j f_1 \ldots f_k$. This begins and ends in $E^0$, so it equals $\phi(\lambda)$ for some cycle $\lambda$ in $E$. This cycle cannot meet any singular vertices in $E$ (or else it would have an entrance), so it must be the case that $\lambda = \phi(\lambda)$. But $s(\mu)$ belongs to $\lambda$, contradicting our assumption that $s(\mu)$ belongs to an infinite tail. Combining this with the previous part shows that if no cycle of $E$ has an entrance, then no cycle of $\tilde{E}$ has an entrance. 
\end{proof}

\begin{lemma}
Let $E$ be a directed graph and let $\tilde{E}$ be a desingularization of $E$. Then $\tilde{E}$ has finite ancestry if and only if $E$ has finite ancestry. 
\end{lemma}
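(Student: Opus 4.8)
The plan is to transfer the condition through the Drinen--Tomforde bijection $\phi\colon E^*\to\{\beta\in\tilde E^*:s(\beta),r(\beta)\in E^0\}$. First I would record two compatibility facts. (i) Both $\phi$ and $\phi^{-1}$ preserve the property of containing a cycle: $\phi$ sends a cycle of $E$ to a nonempty path of $\tilde E$ with equal range and source, and it respects concatenation at vertices of $E^0$, so a factorization $\lambda=\lambda_1 c\lambda_2$ with $c$ a cycle is carried to one of the same shape. (ii) Both $\phi$ and $\phi^{-1}$ preserve minimality of ancestry pairs: the last edge of $\phi(\lambda)$ is the (redirected) last edge of $\lambda$, so two paths of $E$ have a common last edge iff their $\phi$-images do, and an ancestry pair is minimal exactly when its two paths have no common last edge (or one of them is trivial). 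Since $\phi$ also preserves ranges and sources, it follows that for $v,w\in E^0$ the cycle-free minimal ancestry pairs for $v,w$ in $E$ correspond bijectively, via $\phi$, to the cycle-free minimal ancestry pairs $(\tilde\lambda,\tilde\mu)$ for $v,w$ in $\tilde E$ whose common source $s(\tilde\lambda)$ lies in $E^0$.

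Next I would remove the restriction on the common source. If $(\tilde\lambda,\tilde\mu)$ is a minimal ancestry pair of $\tilde E$ with both $\tilde\lambda$ and $\tilde\mu$ nontrivial, then its common source lies in $E^0$: the only edge of $\tilde E$ with source a tail vertex $v_k$ (with $k\ge1$) is the tail edge $f_k$, so any two nontrivial paths ending at $v_k$ share the suffix $f_1\cdots f_k$ and cannot be minimal. Consequently, for $v,w\in E^0$ the number of cycle-free minimal ancestry pairs is the same in $E$ and in $\tilde E$. This already gives one direction: if $\tilde E$ has finite ancestry, then restricting attention to vertices of $E^0$ shows $E$ has finite ancestry.

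For the converse, assume $E$ has finite ancestry. As a preliminary, any two vertices of $E^0$ are then joined by only finitely many cycle-free paths of $E$, since infinitely many cycle-free paths $\gamma$ from $a$ to $b$ would yield infinitely many cycle-free minimal ancestry pairs $(\gamma,b)$ for $(a,b)$. Now fix $\tilde v,\tilde w\in\tilde E^0$; the case $\tilde v,\tilde w\in E^0$ is handled above, so suppose $\tilde v=v_k$ is a tail vertex on the tail at a singular vertex $v\in E^0$. A cycle-free path $\tilde\lambda$ with $r(\tilde\lambda)=v_k$ first runs down the tail, $\tilde\lambda=f_{k+1}\cdots f_{k+j}\,\rho$, and if $\rho$ is nonempty it begins with the unique redirected edge $g_{k+j+1}$ leading back into $E^0$ and then continues from the $E^0$-vertex $s(g_{k+j+1})$. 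Stripping this initial segment (and, if $\tilde w$ too is a tail vertex, the analogous segment of $\tilde\mu$) reduces a cycle-free minimal ancestry pair for $(v_k,\tilde w)$ either to one whose first component is trivial (these are controlled by the finitely many cycle-free paths of $\tilde E$ into $v_k$, hence ultimately by the preliminary remark) or to the data of an index $j$ together with a cycle-free minimal ancestry pair for $(s(g_{k+j+1}),\tilde w)$ in $E$. Finiteness then follows once one knows that only finitely many $j$ can occur: if infinitely many did, prepending $g_{k+j+1}$ to the reduced pairs and trimming any cycle thereby introduced (via the reduction-to-cycle-free lemma already established) would produce infinitely many cycle-free minimal ancestry pairs for the fixed pair $(v,\tilde w)$ in $E$, contradicting finite ancestry.

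I expect this last step to be the main obstacle. The awkward points are: a cycle-free path leaving a tail vertex may re-enter the same tail lower down, so cycle-freeness for such paths is less transparent than in $E$; prepending a redirected edge can introduce a cycle back to $v$, so the trimming must be done carefully; and the degenerate cases (trivial components, paths that never leave a tail) must be disposed of separately. The combinatorial argument is cleanest under the hypothesis --- inherited from the preceding lemma, and equivalent to the same statement for $\tilde E$ --- that no cycle of $E$ has an entrance, since this forces every cycle of $\tilde E$ to miss the tails and is exactly what licenses the trimming step above. Under that hypothesis one can in fact sidestep the combinatorics entirely: $C^*(E)$ and $C^*(\tilde E)$ then have continuous trace simultaneously by the Drinen--Tomforde Morita equivalence, and Theorem~\ref{main-guy} applied to $E$ and to $\tilde E$ converts this into the desired equivalence of the finite-ancestry conditions.
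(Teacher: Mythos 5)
Your overall strategy is the paper's: transport ancestry pairs through the Drinen--Tomforde bijection $\phi$, observe that a minimal pair with both components nontrivial cannot have its common source on a tail (the unique edge leaving a tail vertex would be a common suffix), and reduce a pair based at a tail vertex $v_k$ to one based at the attachment vertex $w\in E^0$ by prepending the tail segment from $v_k$ up to $w$. The first two stages of your argument are correct and are essentially what the paper does. But two things go wrong afterwards. First, your fallback ``sidestep'' is circular: Theorem~\ref{main-guy} applies only to row-finite graphs with no sources, so it cannot be applied to $E$ itself --- the whole purpose of this lemma is to extend that theorem to singular $E$, and the arbitrary-graph theorem is deduced \emph{from} this lemma. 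Second, your count of pairs based at a tail vertex breaks down exactly where you suspect: you need only finitely many depths $j$ to occur, and your argument prepends the redirected edge and then ``trims'' any cycle so created using the earlier reduction lemma. Trimming is not injective (infinitely many distinct pairs upstairs can collapse onto one cycle-free minimal pair downstairs), so no contradiction with finite ancestry of $E$ results.

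Your instinct that the hypothesis that no cycle of $E$ has an entrance is what licenses the reduction is, however, exactly right --- the unconditional statement actually fails. Take $E$ to be a single vertex $v$ carrying infinitely many loops $e_1,e_2,\dots$: the only cycle-free path is trivial, so $E$ has finite ancestry; but in $\tilde E$ the paths $\alpha_m=f_2\cdots f_m g_{m+1}$ (run down the tail from $v_1$ to $v_m$, return to $v_0$ by the redirected loop $g_{m+1}$) are cycle-free paths from $v_0$ to $v_1$ with pairwise distinct final edges, so the pairs $(\alpha_m,\alpha_{m'})$ are infinitely many cycle-free minimal ancestry pairs for $(v_1,v_1)$. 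The correct repair --- which also fills the gap in the paper's own one-line assertion that prepending ``gives an injection into minimal cycle-free ancestry pairs'' --- is to invoke the preceding lemma: when no cycle of $E$ has an entrance, no cycle of $\tilde E$ meets a tail, so prepending $f_1\cdots f_k$ to a cycle-free path with range $v_k$ cannot create a cycle (any cycle created would have to revisit a tail vertex); hence the prepending map followed by $\phi^{-1}$ is an honest injection into the cycle-free minimal ancestry pairs based at $w$, and no trimming is needed. Since the lemma is only ever applied in tandem with the no-entrance condition, the downstream theorem is unaffected, but the hypothesis should be stated explicitly.
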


\begin{proof}
Suppose that $E$ has finite ancestry and let $v$ be a vertex of $\tilde{E}$. We show that $v$ has finitely many cycle free minimal ancestry pairs by defining an injection from minimal ancestry pairs of $v$ to minimal ancestry pairs of some vertex in $E$. 

If $v$ belongs to $E$, then it must be the case that $s(\alpha)=s(\beta)$ belongs to $E^0$ for any minimal ancestry pair $(\alpha,\beta)$. For otherwise $s(\alpha)$ lies on an infinite tail, with only one edge $f$ leaving $s(\alpha)$, and we could factor a common edge $(\alpha,\beta) = (\alpha' f, \beta' f)$. Thus in the case that $v$ belongs to $E$, we can map $(\alpha,\beta) \mapsto (\phi^{-1}(\alpha),\phi^{-1}(\beta))$. This carries cycle-free minimal ancestry pairs to cycle-free minimal ancestry pairs, so $v$ must have finitely many cycle free minimal ancestry pairs in $F$. 

If $v$ belongs to an infinite tail, let $d_1 \ldots d_j$ be the path from $v$ to the singular vertex $w$ to which the infinite tail is attached. Again, if $(\alpha,\beta)$ is a minimal ancestry pair for $v$, it must terminate in a vertex of $E$. Define a map $(\alpha,\beta) \mapsto (\phi^{-1}(d_1 \ldots d_j \alpha, d_1 \ldots d_j \beta_j))$. As above, this will give us an injection into minimal cycle free ancestry pairs for $w$ in $E$. Hence $v$ has finitely many minimal cycle free ancestry pairs. 

If $\tilde{E}$ has finite ancestry then $E$ trivially has finite ancestry by using $\phi$. 

\end{proof}

\begin{theorem}
Let $E$ be an arbitrary graph. Then $C^*(E)$ has continuous trace if and if both
\begin{enumerate}
\item no cycle of $E$ has an entrance, and
\item $E$ has finite ancestry. 
\end{enumerate}
\end{theorem}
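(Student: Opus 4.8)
The plan is to reduce the arbitrary-graph case to Theorem \ref{main-guy} (the row-finite, no-sources case) by passing through the Drinen--Tomforde desingularization $\tilde{E}$ and using the preservation results established in the preceding three lemmas. The key observation is that the two combinatorial conditions in the statement are insensitive to desingularization: the lemma on cycles shows that no cycle of $E$ has an entrance if and only if no cycle of $\tilde{E}$ has an entrance, and the lemma on ancestry shows that $E$ has finite ancestry if and only if $\tilde{E}$ has finite ancestry. Meanwhile, the $C^*$-algebraic side is controlled by \cite{DrinTom}: $C^*(E)$ is a full corner in $C^*(\tilde{E})$, hence the two are Morita equivalent.

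The steps I would carry out are as follows. First, fix an arbitrary graph $E$ and choose a desingularization $\tilde{E}$; by construction $\tilde{E}$ is row-finite with no sources. Second, invoke the Morita equivalence $C^*(E) \sim_{\mathrm{M}} C^*(\tilde{E})$ from \cite[Thm. 2.11]{DrinTom}, together with the standard fact that having continuous trace is preserved under Morita equivalence (it can be phrased purely in terms of the spectrum and the Dixmier--Douady invariant, or one notes a full corner of a continuous-trace algebra is continuous-trace and conversely); this gives that $C^*(E)$ has continuous trace if and only if $C^*(\tilde{E})$ has continuous trace. Third, apply Theorem \ref{main-guy} to $\tilde{E}$: since $\tilde{E}$ is row-finite with no sources, $C^*(\tilde{E})$ has continuous trace if and only if no cycle of $\tilde{E}$ has an entrance and $\tilde{E}$ has finite ancestry. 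Fourth, rewrite both of those conditions in terms of $E$ using the two desingularization lemmas. Chaining the equivalences yields exactly the claimed statement.

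The only point requiring a little care is the assertion that continuous trace passes across the Morita equivalence; I would cite the relevant fact from \cite{RaeWil} that continuous trace is a Morita-invariant property of $C^*$-algebras (equivalently, that a full corner $pAp$ has continuous trace if and only if $A$ does), rather than reprove it. Everything else is a bookkeeping assembly of results already in hand, so I do not expect any genuine obstacle; the proof will be short.

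\begin{proof}
Let $\tilde{E}$ be a desingularization of $E$; then $\tilde{E}$ is row-finite with no sources. By \cite[Thm. 2.11]{DrinTom}, $C^*(E)$ is a full corner in $C^*(\tilde{E})$, hence $C^*(E)$ and $C^*(\tilde{E})$ are Morita equivalent. Since continuous trace is preserved under Morita equivalence (see \cite{RaeWil}), $C^*(E)$ has continuous trace if and only if $C^*(\tilde{E})$ has continuous trace. By Theorem \ref{main-guy} applied to $\tilde{E}$, the latter holds if and only if no cycle of $\tilde{E}$ has an entrance and $\tilde{E}$ has finite ancestry. By the lemma on desingularization and cycle entrances, no cycle of $\tilde{E}$ has an entrance if and only if no cycle of $E$ has an entrance; by the lemma on desingularization and finite ancestry, $\tilde{E}$ has finite ancestry if and only if $E$ has finite ancestry. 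Combining these equivalences, $C^*(E)$ has continuous trace if and only if no cycle of $E$ has an entrance and $E$ has finite ancestry.
\end{proof}
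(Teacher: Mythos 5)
Your proof is correct and follows essentially the same route as the paper: fix a Drinen--Tomforde desingularization $\tilde{E}$, transfer continuous trace across the Morita equivalence $C^*(E) \sim_{\mathrm{M}} C^*(\tilde{E})$, apply the row-finite no-sources theorem to $\tilde{E}$, and translate the two combinatorial conditions back to $E$ via the two desingularization lemmas. No issues.
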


\begin{proof}
Let us begin by fixing a desingularization $\tilde{E}$ of $E$. If no cycle of $E$ has an entrance and $E$ has finite ancestry, then Lemmas 8 and 9 tell us that the same is true of $\tilde{E}$. Then Theorem 4 says that $C^*(\tilde{E})$ has continuous trace. Theorem 5 and the well-known fact that the class of continuous-trace $C^*$-algebras is closed under Morita equivalence then give that $C^*(E)$ has continuous trace. 

Now suppose that $C^*(E)$ has continuous trace. Then $C^*(\tilde{E})$ has continuous trace as in the previous part of the proof. By Theorem 4, we see that no cycle of $\tilde{E}$ has an entrance and $\tilde{E}$ has finite ancestry. Lemmas 8 and 9 again give us that $E$ satisfies the same conditions. 
\end{proof}

\begin{corollary}
If $E$ is a graph with no cycles, then $C^*(E)$ has continuous trace if and only if $E$ has finite ancestry. 
\end{corollary}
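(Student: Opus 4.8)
The plan is to invoke the preceding theorem on arbitrary graphs, which states that $C^*(E)$ has continuous trace if and only if (1) no cycle of $E$ has an entrance and (2) $E$ has finite ancestry. When $E$ has no cycles, condition (1) holds vacuously: there are no cycles at all in $E$, so a fortiori none of them has an entrance. Thus the two-part criterion of the theorem collapses to condition (2) by itself, which is precisely the statement that $C^*(E)$ has continuous trace if and only if $E$ has finite ancestry.

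I expect no real obstacle here. The only thing to verify is the trivial logical point that an empty family of cycles automatically satisfies the requirement ``every cycle has no entrance,'' after which the corollary is an immediate one-line deduction from the theorem; no further desingularization or groupoid machinery is required beyond what has already been set up.
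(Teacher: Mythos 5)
Your proposal is correct and matches the paper's intent exactly: the paper states this corollary without proof precisely because it is the immediate specialization of the preceding theorem, with the no-entrance condition holding vacuously when $E$ has no cycles. Nothing further is needed.
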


This is useful for studying AF algebras. Drinen showed that every AF algebra arises as the $C^*$-algebra of a locally finite pointed directed graph (\cite{Drinen}). Tyler gave a useful complementary result, showing that if $E$ is Bratteli diagram for an AF algebra $A$, then there is a Bratteli diagram $KE$ for $A$ such that (treating the diagrams as directed graphs) $C^*(KE)$ contains $A$ and $C^*(E)$ as complementary full corners (\cite{Tyler}). Thus in particular $A$ and $C^*(E)$ are Morita equivalent. 

\begin{corollary}
Let $A$ be an AF algebra and let $E$ be a Bratteli diagram for $A$, treated as a directed graph. Then $A$ has continuous trace if and only if $E$ has finite ancestry.
\end{corollary}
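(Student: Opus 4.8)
The plan is to reduce everything to the preceding corollary (the no-cycles case of Theorem~6) by a Morita-equivalence argument. First I would invoke Tyler's construction from \cite{Tyler}: given the Bratteli diagram $E$ for $A$, there is a Bratteli diagram $KE$ such that, when both are regarded as directed graphs, $A$ and $C^*(E)$ sit inside $C^*(KE)$ as complementary full corners. A full corner $pBp$ with $\overline{BpB}=B$ is Morita equivalent to $B$, so in particular $A$ and $C^*(E)$ are each Morita equivalent to $C^*(KE)$, hence Morita equivalent to one another.

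Next I would use the fact already invoked in the proof of Theorem~6, namely that the class of continuous-trace $C^*$-algebras is closed under Morita equivalence. Combined with the previous step this yields the first half of the argument: $A$ has continuous trace if and only if $C^*(E)$ has continuous trace.

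The remaining point is that a Bratteli diagram, viewed as a directed graph, has no directed cycles: its vertex set is partitioned into levels $V_0, V_1, \ldots$, and every edge joins two consecutive levels, so the level index changes monotonically along any path and a path can never return to its starting vertex. Hence $E$ satisfies the hypothesis of the corollary following Theorem~6, which gives that $C^*(E)$ has continuous trace if and only if $E$ has finite ancestry. Chaining the two equivalences produces the statement. (Alternatively one could bypass that corollary and go straight to Theorem~6, observing that condition~(1) there is vacuous for an acyclic graph.)

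I do not expect a serious obstacle: the substance has been front-loaded into Theorem~\ref{main-guy}, the desingularization lemmas of Section~4 that upgrade it to Theorem~6, and Tyler's theorem. The only points needing care are confirming that the corners produced by Tyler's construction are genuinely full — so that a two-sided Morita equivalence, and not merely a one-directional implication, is available — and the elementary verification that a Bratteli diagram is acyclic; neither should cause difficulty.
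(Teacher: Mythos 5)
Your argument is exactly the one the paper intends: Tyler's theorem gives that $A$ and $C^*(E)$ are complementary full corners in $C^*(KE)$ and hence Morita equivalent, Morita invariance of continuous trace transfers the property, and since a Bratteli diagram is acyclic the preceding corollary (the no-cycles case of the general theorem) applies. No gaps; this matches the paper's (implicit) proof.
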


\begin{example}
Let $A = \bigotimes_{n=1}^\infty M_2(\C)$ be the UHF algebra of type $2^\infty$. The familiar Bratteli diagram for $A$ (after decoration with labels) is

\begin{center}
	\tikzset{every loop/.style={min distance=0.75in, in=165, out=75, looseness=1}}
	\begin{tikzpicture}[->, >=stealth', shorten >=1pt, shorten <=1pt, auto, thick, main node/.style={circle,fill=black,scale=0.3}]
	\node[main node,name=v1] at (0,0) {};
	\node[main node,name=v2]  at (2,0) {};
	\node[main node,name=v3]  at (4,0) {};
	\node at (4.5,0) {$\ldots$};
	\node at (0,-.25) {$v_1$};
	\node at (2,-.25) {$v_2$};
	\node at (4,-.25) {$v_3$};
\path[every node/.style={font=\sffamily\small}]

(v2) edge [bend right] node [above] {$e_1$} (v1)
(v2) edge [bend left] node [below]{$f_1$} (v1)
(v3) edge [bend right] node [above] {$e_2$} (v2)
(v3) edge [bend left] node [below]{$f_2$} (v2);
\end{tikzpicture}
\end{center}

This graph fails to have finite ancestry: for each $k$, we have the cycle-free minimal ancestry pair $(f_1 e_2 f_3 \ldots e_{2k}, e_1 f_2 e_3 \ldots f_{2k})$ for $v_1,v_1$. Thus it does not have continuous trace. (As is well-known, we can actually reach a stronger conclusion, namely that $A$ does not have Hausdorff spectrum; see \cite{Goehle2}.) 
\end{example}

\section{Higher-rank graphs}

In this section we partially extend the results of the previous section to the realm of higher-rank graphs. We have not completely described which higher-rank graph $C^*$-algebras have continuous trace. However, we do characterize the \emph{aperiodic} higher-rank graphs which yield continuous-trace $C^*$-algebras. The jump in combinatorial complexity from the graph to the $k$-graph case is noteworthy. In addition, we provide some negative results regarding the \emph{generalized cycles} of \cite{ES}. In particular, a generalized cycle with entry causes the affiliated vertex projection to be infinite, which cannot happen if the algebra has Hausdorff spectrum. 

\begin{remark}
The semigroup $\N^k$ is treated as a category with a single object, $0$. 
\end{remark}

\begin{definition} 
A \emph{higher-rank graph} (or \emph{$k$-graph}) is a countable category $\Lambda$ equipped with a \emph{degree functor} $d:\Lambda \to \N^k$ which satisfies the following factorization property: if $d(\lambda)=m+n$ for some $m,n \in \N^k$, then $\lambda=\mu \nu$ for some unique $\mu,\nu$ such that $d(\mu)=m$ and $d(\nu)=n$. The \emph{vertices} $\Lambda^0$ of $\Lambda$ are identified with the objects. The elements of $\Lambda$ are referred to as \emph{paths}. For fixed degree $n \in \N^k$, the paths of degree $n$ are denoted by $\Lambda^n$. We refer to paths of degree $0$ as \emph{vertices} in the $k$-graph (each path in $\Lambda$) has a well-defined range vertex and source vertex. 
\end{definition}

We can affiliate a $C^*$-algebra to a higher-rank graph but some additional hypotheses have to be added in order to ensure the result is not trivial. The hypotheses we use are not the weakest set which defines a meaningful $C^*$-algebra; however, they allow us to use the groupoid machinery easily. 
\begin{definition}
A higher-rank graph is \emph{row-finite} if each vertex $v \in \Lambda^0$ and degree $n \in \N^k$, there are only finitely many paths of degree $n$ with range $v$. It is said to have \emph{no sources} if for all $v \in \Lambda^0$ and $n$ there is some path $\lambda$ with $d(\lambda)=n$ and $r(\lambda)=v$. 
\end{definition}

\begin{definition}
Let $\Lambda$ be a row-finite $k$-graph with no sources. Then the higher-rank graph $C^*$-algebra of $\Lambda$, denoted $C^*(\Lambda)$, is the universal $C^*$-algebra generated by a family of partial isometries $\{s_\lambda\}_{\lambda \in \Lambda}$ satisfying: 
\begin{enumerate}
\item $\{s_v: v \in \Lambda^0\}$ is a family of mutually orthogonal projections; 
\item if $\lambda, \mu \in \Lambda$ with $s(\lambda)=r(\mu)$, then $s_\lambda s_\mu = s_{ \lambda \mu}$; 
\item $s_\lambda^* s_\lambda = s_{s(\lambda)}$; 
\item for any $v \in \Lambda^0$ and any degree $n \in \N^k$, we have $s_v = \sum_{\lambda \in \Lambda^n: r(\lambda)=v} s_\lambda s_\lambda^*$. 
\end{enumerate}
\end{definition}

Just as in the graph case, we study continuous-trace higher-rank graph $C^*$-algebras by studying an affiliated groupoid. The following $k$-graph is used to define infinite paths in $k$-graphs. 
\begin{definition}
Let $\Omega_k$ be the category of all pairs $\{(m,n): m \leq n\}$, where $m \leq n$ if $m_i \leq n_i$ for $i=1,\ldots,k$. The composition is given by $(m,n)(n,p)=(m,p)$. The degree functor is given by $d(m,n)=n-m$. The objects are all pairs of the form $(m,m)$. If $\Lambda$ is a $k$-graph, then an \emph{infinite path} in $\Lambda$ is a degree preserving functor $x: \Omega_k \to \Lambda$. The collection of infinite paths in $\Lambda$ is denoted $\Lambda^\infty$. 

Let $\Lambda$ be a $k$-graph and let $x$ be an infinite path in $\Lambda$. For any $p \in \N^k$, we define $\sigma^p x$ to be the infinite path given by  $\sigma^px(m,n)=x(m+p,n+p)$.  The \emph{range} of an infinite path $x \in \Lambda^\infty$ is defined to be $x(0,0)$. If $\lambda \in \Lambda$ and $x \in \Lambda^\infty$ with $s(\lambda)=r(x)$, then there is a unique path $y=\lambda x \in \Lambda^\infty$ such that $\sigma^{d(\lambda)} y = x$ and $y(0,d(\lambda)) = \lambda$. 
\end{definition}

Now we can define the higher-rank version of the path groupoid. As noted in \cite{KP},  by the no sources assumption we know that for every vertex $v \in \Lambda^0$, there is at least one $x \in \Lambda^\infty$ with $r(x)=v$. 

\begin{definition}[\cite{KP}]
Let $\Lambda$ be a $k$-graph. Then the \emph{path groupoid} of $\Lambda$ is \[
 G_\Lambda = \{(x,n,y) \in \Lambda^\infty \times \Z^k \times \Lambda^\infty: \exists p,q \in \N^k \text{ such that } \sigma^p x = \sigma^q y \text{ and } p-q = n \} .\] The groupoid operations are given by $(x,n,y)(y,m,z)=(x,m+n,z)$ and $(x,n,y)^{-1}=(y,-n,x)$. 
\end{definition}

The topology on $G_\Lambda$ is defined in the same way as the topology on $G_E$, for $E$ a graph. The basic open sets are 
\[
Z(\alpha,\beta) = \{ (x,n,y) \in G_\Lambda: \sigma^{d(\alpha)}(x)=\sigma^{d(\beta)}(y), d(\alpha)-d(\beta) = n \} .\]

The topology on $G_\Lambda$ generated by these sets makes it into a locally compact Hausdroff \'{e}tale groupoid over unit space $\Lambda^\infty$. (See \cite[Prop. 2.8]{KP}.) The relative topology on the unit space can be described by open sets of the form $Z(\alpha) =\{x \in \Lambda^\infty: x(0,d(\alpha))=\alpha \}$. 

\begin{theorem}{\cite[Cor. 3.5]{KP}}
Let $\Lambda$ be a row-finite $k$-graph with no sources and let $G_\Lambda$ be its path groupoid. Then $C^*(\Lambda) \cong C^*(G_\Lambda)$. 
\end{theorem}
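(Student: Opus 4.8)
The plan is to build a Cuntz--Krieger $\Lambda$-family inside $C_c(G_\Lambda)$, invoke the universal property of $C^*(\Lambda)$ to obtain a $*$-homomorphism into $C^*(G_\Lambda)$, prove surjectivity by a density argument, and prove injectivity by a gauge-invariant uniqueness theorem. For each $\lambda \in \Lambda$ set $t_\lambda = \mathbf{1}_{Z(\lambda,s(\lambda))} \in C_c(G_\Lambda)$, the indicator of the compact-open set $Z(\lambda,s(\lambda)) = \{(\lambda x, d(\lambda), x): r(x)=s(\lambda)\}$. Each $Z(\alpha,\beta)$ is an open bisection of the \'etale groupoid $G_\Lambda$, so convolution of indicators is geometric: $\mathbf{1}_{Z(\alpha,\beta)} * \mathbf{1}_{Z(\gamma,\delta)} = \mathbf{1}_{Z(\alpha,\beta)Z(\gamma,\delta)}$, and one checks --- using the factorization property of $\Lambda$ together with the ($k$-graph analogue of the) fact quoted from \cite{KPRR} that $Z(\alpha,\beta) \cap Z(\gamma,\delta) = \emptyset$ unless one pair is obtained from the other by appending a common path --- that this product is $\mathbf{1}_{Z(\alpha\epsilon,\delta)}$ if $\beta = \gamma\epsilon$, is $\mathbf{1}_{Z(\alpha,\delta\epsilon)}$ if $\gamma = \beta\epsilon$, and is $0$ otherwise. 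In particular $t_\lambda^* = \mathbf{1}_{Z(s(\lambda),\lambda)}$, so $t_\lambda^* t_\lambda = t_{s(\lambda)}$, $t_\lambda t_\mu = t_{\lambda\mu}$ when $s(\lambda)=r(\mu)$, and the $t_v$ ($v \in \Lambda^0$) are mutually orthogonal projections; this gives relations (1)--(3). For (4), fix $v \in \Lambda^0$ and $n \in \N^k$: row-finiteness makes $\{\lambda \in \Lambda^n: r(\lambda)=v\}$ finite, the no-sources hypothesis makes it nonempty, and the factorization property yields the disjoint decomposition $Z(v,v) = \bigsqcup_{\lambda \in \Lambda^n,\, r(\lambda)=v} Z(\lambda,\lambda)$, whence $t_v = \sum_{\lambda \in \Lambda^n,\, r(\lambda)=v} t_\lambda t_\lambda^*$.

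By the universal property of $C^*(\Lambda)$ there is a homomorphism $\pi: C^*(\Lambda) \to C^*(G_\Lambda)$ with $\pi(s_\lambda)=t_\lambda$. Surjectivity holds because $\operatorname{span}\{t_\alpha t_\beta^*: s(\alpha)=s(\beta)\} = \operatorname{span}\{\mathbf{1}_{Z(\alpha,\beta)}\}$, and the indicators of basic compact-open bisections span a dense $*$-subalgebra of $C_c(G_\Lambda)$ in the inductive-limit topology (any $f \in C_c(G_\Lambda)$ has compact support meeting only finitely many basic sets, on each of which it is locally constant), hence a dense subalgebra of $C^*(G_\Lambda)$.

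For injectivity I would apply the gauge-invariant uniqueness theorem for $k$-graph algebras. The cocycle $c: G_\Lambda \to \Z^k$, $c(x,n,y)=n$, is continuous and induces an action of $\T^k$ on $C^*(G_\Lambda)$ with $z \cdot \mathbf{1}_{Z(\alpha,\beta)} = z^{d(\alpha)-d(\beta)}\mathbf{1}_{Z(\alpha,\beta)}$; this intertwines with the gauge action on $C^*(\Lambda)$, so $\pi$ is equivariant, and $\pi(s_v) = \mathbf{1}_{Z(v,v)} \neq 0$ for every vertex $v$. The gauge-invariant uniqueness theorem then forces $\pi$ to be injective, and together with surjectivity this gives $C^*(\Lambda) \cong C^*(G_\Lambda)$. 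The main obstacle is this last step: it relies on amenability of $G_\Lambda$ (equivalently $C^*(G_\Lambda)=C^*_r(G_\Lambda)$, so that the canonical conditional expectation onto $C_0(\Lambda^\infty)$ is faithful) and on matching it, under $\pi$, with the faithful expectation of $C^*(\Lambda)$ onto the fixed-point algebra of its gauge action. The combinatorial heart --- the convolution identity for the $Z(\alpha,\beta)$ and the finiteness of $\{\lambda \in \Lambda^n : r(\lambda) = v\}$ --- is routine given the factorization property, but it must be organized carefully enough to drive relation (4) and the density argument.
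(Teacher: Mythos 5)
First, a point of comparison: the paper does not prove this statement at all --- it is quoted from \cite[Cor.~3.5]{KP} --- so there is no internal argument to measure you against. Your architecture (build a Cuntz--Krieger $\Lambda$-family from the indicators of the bisections $Z(\lambda,s(\lambda))$, invoke universality, get surjectivity from density of the spanned $*$-subalgebra of $C_c(G_\Lambda)$, and get injectivity from the gauge-invariant uniqueness theorem applied to the $\T^k$-action dual to the cocycle $c(x,n,y)=n$) is the standard route and is essentially how Kumjian and Pask argue.

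There is, however, one genuinely false step as written. The convolution formula you assert --- that $\mathbf{1}_{Z(\alpha,\beta)} * \mathbf{1}_{Z(\gamma,\delta)}$ equals $\mathbf{1}_{Z(\alpha\epsilon,\delta)}$ or $\mathbf{1}_{Z(\alpha,\delta\epsilon)}$ when one of $\beta,\gamma$ extends the other, and vanishes otherwise --- is the $1$-graph formula, and it fails for $k\ge 2$: since $\N^k$ is only partially ordered, $\beta z=\gamma w$ can hold with $d(\beta)$ and $d(\gamma)$ incomparable, so that neither of $\beta,\gamma$ extends the other yet the product set $Z(\alpha,\beta)Z(\gamma,\delta)$ is nonempty. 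For the same reason there is no verbatim ``$k$-graph analogue'' of the quoted disjointness fact from \cite{KPRR}; this is exactly the difficulty the paper has to fight through in Lemma~\ref{overlap}, where basic sets attached to incomparable degrees can meet. The correct identity is
\[
\mathbf{1}_{Z(\alpha,\beta)} * \mathbf{1}_{Z(\gamma,\delta)} \;=\; \sum_{(\eta,\zeta)\in\Lambda^{\min}(\beta,\gamma)} \mathbf{1}_{Z(\alpha\eta,\delta\zeta)},
\]
where $\Lambda^{\min}(\beta,\gamma)$ is the set of pairs $(\eta,\zeta)$ with $\beta\eta=\gamma\zeta$ and $d(\beta\eta)=d(\beta)\vee d(\gamma)$; this set is finite by row-finiteness. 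Your verifications of relations (1)--(3) survive because in those instances one of the two inner paths is a vertex or the two are equal, so the minimal-common-extension set is a singleton; relation (4) and the surjectivity argument also survive once the corrected formula is used, since the span of the $\mathbf{1}_{Z(\alpha,\beta)}$ is still closed under multiplication (now as finite sums). The gauge-invariant uniqueness step is fine; note that, applied to the full groupoid $C^*$-algebra, it does not actually require amenability of $G_\Lambda$, because the dual action of $c$ already exists on $C^*(G_\Lambda)$ --- amenability is only needed if you want to identify the full and reduced completions.
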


Therefore we can decide questions about $C^*(\Lambda)$ by studying $G_\Lambda$. Deciding when the path groupoid $G_\Lambda$ has continuously varying isotropy groups is substantially harder than the graph case. 

\begin{definition}
Let $\Lambda$ be a row-finite $k$-graph with no sources. Then $\Lambda$ is said to be \emph{tight} if whenever $g =(x,n,x) \in G_\Lambda(x)$ for some $x \in \Lambda^\infty$, there must exist $\alpha,\beta \in \Lambda$ such that 
\begin{itemize}
\item[(i)] $g \in Z(\alpha,\beta)$ (so that in particular $r(\alpha)=r(\beta)=r(x)$ and $s(\alpha)=s(\beta)$;
\item[(ii)] if $y \in \Lambda^\infty$ and $r(y) = s(\alpha)$, then $\alpha y = \beta y$. 
\end{itemize}
\end{definition}

\begin{remark}
It is not difficult to see that a $1$-graph $E$ is tight if and only if no cycle of $E$ has an entrance. 
\end{remark}

\begin{lemma} \label{tight}
Let $\Lambda$ be a row-finite $k$-graph with no sources. Then $G_\Lambda$ has continuously varying stabilizers if and only if $\Lambda$ is tight. 
\end{lemma}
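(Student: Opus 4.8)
The plan is to prove both directions by unwinding the Fell-topology convergence conditions for the stabilizer map $x \mapsto G_\Lambda(x)$ and matching them against the combinatorial ``tight'' condition. The key point to exploit is that, for a $k$-graph path groupoid, every isotropy element $g = (x,n,x)$ lies in some basic set $Z(\alpha,\beta)$, and that $Z(\alpha,\beta)$ meets the isotropy bundle precisely when $s(\alpha)=s(\beta)$ and $d(\alpha)-d(\beta)$ is the appropriate period. So the first step is to record the elementary description of $\operatorname{Iso}(G_\Lambda) \cap Z(\alpha,\beta)$: if $d(\alpha)=d(\beta)$ it is the set of $(\alpha z,0,\beta z)$ with $\alpha z=\beta z$, and in general one reduces to this case after postcomposing. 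I would also note up front that $G_\Lambda(x)$ is always a subgroup of $\mathbb{Z}^k$ (embedded via $n \mapsto (x,n,x)$), so Fell convergence of $G_\Lambda(x_i)$ to $G_\Lambda(x)$ can be tested ``one period vector at a time.''

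For the direction (tight $\Rightarrow$ continuous isotropy), suppose $x_i \to x$ in $\Lambda^\infty$ and $g=(x,n,x) \in G_\Lambda(x)$. By tightness choose $\alpha,\beta$ with $g \in Z(\alpha,\beta)$ and $\alpha y = \beta y$ for every $y$ with $r(y)=s(\alpha)$. Since $Z(\alpha)$ is open and $x \in Z(\alpha)$ (because $g \in Z(\alpha,\beta)$ forces $x(0,d(\alpha))=\alpha$, hence also $x \in Z(\beta)$ via $\alpha z = \beta z$ with $z = \sigma^{d(\alpha)}x$), eventually $x_i \in Z(\alpha) \cap Z(\beta)$; writing $x_i = \alpha z_i = \beta z_i$ we get $(x_i, n, x_i) \in G_\Lambda(x_i)$ with $n = d(\alpha)-d(\beta)$, and these converge to $g$. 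That verifies Fell condition (2) for the generator $n$, and since isotropy groups are subgroups of $\mathbb{Z}^k$ one gets condition (2) for all of $G_\Lambda(x)$ by taking sums/inverses. Fell condition (1) is the easier, ``automatic'' half: if $(x_i,n_i,x_i) \to (x,n,x)$ in $G_\Lambda$, then eventually $n_i=n$ (the $\mathbb{Z}^k$-coordinate is locally constant on the étale groupoid), so the limit lies in $G_\Lambda(x)$; this direction needs no hypothesis on $\Lambda$. Assembling these gives continuity of $x \mapsto G_\Lambda(x)$.

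For the converse (continuous isotropy $\Rightarrow$ tight), I argue contrapositively. Suppose $\Lambda$ is not tight: there is $x$ and $g=(x,n,x) \in G_\Lambda(x)$ such that for every $\alpha,\beta$ with $g \in Z(\alpha,\beta)$ there exists $y^{\alpha,\beta}$ with $r(y^{\alpha,\beta})=s(\alpha)$ but $\alpha y^{\alpha,\beta} \ne \beta y^{\alpha,\beta}$. The sets $Z(x(0,m), x(0,m'))$ containing $g$ (with $m-m'=n$, $m,m'$ large) form a neighborhood basis for $g$ in $G_\Lambda$; using the failure of tightness at each level one builds a sequence of infinite paths $x_i$ with $x_i \to x$ (by agreeing with $x$ on longer and longer initial segments) but with $(x_i, n, x_i) \notin G_\Lambda(x_i)$ for all $i$ — indeed one can arrange that no element of $G_\Lambda(x_i)$ is close to $g$, because $x_i$ is chosen to break the putative period relation at a controlled stage. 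This violates Fell condition (2) (no choice of $g_i \in G_\Lambda(x_i)$ can converge to $g$), so the stabilizer map is discontinuous at $x$.

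The main obstacle I anticipate is the bookkeeping in the converse: in the $1$-graph case ``no cycle has an entrance'' lets one pin down exactly where and how the period fails, but for $k \geq 2$ the period subgroup of $\mathbb{Z}^k$ can have rank up to $k$, and the non-tight generator $n$ may interact subtly with the factorization property, so some care is needed to choose the $x_i$ so that they genuinely avoid a neighborhood of $g$ in $\operatorname{Iso}(G_\Lambda)$ rather than merely failing the one relation $\alpha y = \beta y$. Making precise that the candidate $Z(\alpha,\beta)$'s really do run over a neighborhood basis of $g$, and that ``breaking'' the relation on one coordinate forces $(x_i,n,x_i)$ out of the isotropy group (not just out of that particular $Z(\alpha,\beta)$), is the delicate step; everything else is a routine translation between the Fell topology and the étale topology on $G_\Lambda$.
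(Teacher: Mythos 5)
Your forward direction (tight $\Rightarrow$ continuous isotropy) is correct: Fell condition (1) is automatic from continuity of $r$ and $s$, and for condition (2) tightness hands you, for each $g=(x,n,x)$, a basic bisection $Z(\alpha,\beta)\ni g$ lying entirely in $\operatorname{Iso}(G_\Lambda)$, so the unique preimages of the $x_i$ under $r|_{Z(\alpha,\beta)}$ do the job. The converse, however, is where you yourself flag ``the delicate step,'' and that step is genuinely not carried out: it is not enough to choose $x_i=\alpha_i y_i$ breaking the single relation $\alpha_i y_i=\beta_i y_i$ at level $i$; you must show that $G_\Lambda(x_i)$ misses some \emph{fixed} neighborhood of $g$ for all large $i$, and as written nothing rules out that $G_\Lambda(x_i)$ contains some other element converging to $g$. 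The missing computation is this: fix $m_0$ with $g\in Z(x(0,m_0),x(0,m_0'))$ where $m_0-m_0'=n$ and $\sigma^{m_0}x=\sigma^{m_0'}x$; for $m\ge m_0$ choose $y_m$ with $x(0,m)y_m\ne x(0,m')y_m$ and set $x_m=x(0,m)y_m$. Since $Z(x(0,m_0),x(0,m_0'))$ is a bisection, the only candidate element of it with range $x_m$ is $(x(0,m_0)w,n,x(0,m_0')w)$ with $w=x(m_0,m)y_m$, and the periodicity $\sigma^{m_0}x=\sigma^{m_0'}x$ gives $x(0,m_0')x(m_0,m)=x(0,m')$, so its source is $x(0,m')y_m\ne x_m$. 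Hence $G_\Lambda(x_m)\cap Z(x(0,m_0),x(0,m_0'))=\emptyset$ for every $m\ge m_0$, which violates Fell condition (2). Without this use of the periodicity of $x$ to rewrite $\beta_0 w$, the contrapositive does not close.

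You should also know that the paper avoids all of this bookkeeping: it quotes the fact that an \'etale groupoid has Fell-continuous isotropy if and only if $\operatorname{Iso}(G_\Lambda)$ is open, and observes that the definition of tightness is verbatim the statement that every point of $\operatorname{Iso}(G_\Lambda)$ has a basic neighborhood $Z(\alpha,\beta)$ contained in $\operatorname{Iso}(G_\Lambda)$, i.e.\ that the isotropy bundle is open. Your direct Fell-topology argument is essentially an unpacking of one direction of that cited equivalence; if you prefer to keep it self-contained, the repair above is what is needed, but citing the openness criterion is both shorter and sidesteps the delicate step entirely.
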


\begin{proof}
It is shown in \cite{LalMil} that an \'etale groupoid has continuously varying isotropy if and only if the isotropy subgroupoid is open. The lemma follows immediately then from this and the description of the topology on $G_\Lambda$. 
\end{proof}

We modify our definition of ancestry pair to the $k$-graph situation. 

\begin{definition}
Let $\Lambda$ be a row-finite $k$-graph with no sources and let $v,w \in \Lambda^0$ be two vertices. Then an \emph{ancestry pair} for $v,w$ is a pair $(\lambda,\mu) \in \Lambda \times \Lambda$ such that $r(\lambda)=v$, $r(\mu)=w$, and $s(\lambda)=s(\mu)=w$. An ancestry pair $(\lambda,\mu)$ is \emph{minimal} if $(\lambda,\mu)=(\lambda' \nu, \mu' \nu)$ implies $\nu=s(\lambda)$. We say that $\Lambda$ has \emph{strong finite ancestry} if each pair of vertices has at most finitely many minimal ancestry pairs. 
\end{definition}

\begin{remark}
Strong finite ancestry implies finite ancestry in the $1$-graph case. In fact, a $1$-graph $E$ having strong finite ancestry is equivalent to $E$ having finite ancestry and $E$ having no directed cycles. 
\end{remark}

By $R_\Lambda$ we denote the orbit groupoid of $G_\Lambda$ (again equipped with the quotient topology from $\gamma \mapsto (r(\gamma),s(\gamma))$).

\begin{lemma} \label{strong-finite}
Suppose that $\Lambda$ is a row-finite $k$-graph with no sources. If $\Lambda$ has strong finite ancestry, then $R_\Lambda$ is proper. 
\end{lemma}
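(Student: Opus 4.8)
The plan is to run a streamlined version of the sufficiency half of the proof of Theorem~\ref{main-guy}: strong finite ancestry bounds \emph{all} minimal ancestry pairs of a given pair of vertices at once, so the cycle-removal arguments of the $1$-graph case are unnecessary. I will show directly that the orbit map $\Phi_{R_\Lambda}\colon R_\Lambda\to\Lambda^\infty\times\Lambda^\infty$, $\gamma\mapsto(r(\gamma),s(\gamma))$, is proper. Since $\Lambda^\infty\times\Lambda^\infty$ is locally compact Hausdorff and the sets $Z(v)\times Z(w)$ with $v,w\in\Lambda^0$ cover it by compact-open sets (every infinite path has a range, and each $Z(v)$ is compact-open by row-finiteness and the no-sources hypothesis, cf.\ \cite{KP}), every compact subset of $\Lambda^\infty\times\Lambda^\infty$ is covered by finitely many of them; hence it suffices to prove that $\Phi_{R_\Lambda}^{-1}(Z(v)\times Z(w))$ is compact for all $v,w$ (a preimage of a general compact set is then a closed subset of a finite union of such preimages, hence compact).

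Next I identify $\Phi_{G_\Lambda}^{-1}(Z(v)\times Z(w))$. An element $(x,n,y)\in G_\Lambda$ has range in $Z(v)$ and source in $Z(w)$ precisely when $r(x)=v$ and $r(y)=w$; choosing $p,q\in\N^k$ with $\sigma^p x=\sigma^q y$ and $p-q=n$ and putting $\alpha=x(0,p)$, $\beta=y(0,q)$ exhibits $(x,n,y)\in Z(\alpha,\beta)$ for an ancestry pair $(\alpha,\beta)$ for $v,w$; conversely $Z(\alpha,\beta)\subseteq\Phi_{G_\Lambda}^{-1}(Z(v)\times Z(w))$ for any such pair. Thus $\Phi_{G_\Lambda}^{-1}(Z(v)\times Z(w))$ is the union of the sets $Z(\alpha,\beta)$ over all ancestry pairs for $v,w$. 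Given any ancestry pair $(\alpha,\beta)$, repeatedly splitting off a non-trivial common suffix $(\alpha,\beta)=(\alpha'\nu,\beta'\nu)$ strictly decreases $|d(\alpha)|$, so after finitely many steps (each split being legitimate by the factorization property) we reach a minimal ancestry pair $(\alpha_0,\beta_0)$; and since $Z(\alpha'\nu,\beta'\nu)\subseteq Z(\alpha',\beta')$ we get $Z(\alpha,\beta)\subseteq Z(\alpha_0,\beta_0)$. Hence $\Phi_{G_\Lambda}^{-1}(Z(v)\times Z(w))=\bigcup_{(\alpha,\beta)\in\mathcal{M}}Z(\alpha,\beta)$, where $\mathcal{M}$ is the set of minimal ancestry pairs for $v,w$, which is \emph{finite} by strong finite ancestry.

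Finally I transfer the computation to $R_\Lambda$. The map $\pi_R\colon G_\Lambda\to R_\Lambda$ is continuous (quotient topology) and surjective, and $\Phi_{G_\Lambda}=\Phi_{R_\Lambda}\circ\pi_R$, so by surjectivity of $\pi_R$ we have $\Phi_{R_\Lambda}^{-1}(Z(v)\times Z(w))=\pi_R\bigl(\Phi_{G_\Lambda}^{-1}(Z(v)\times Z(w))\bigr)=\bigcup_{(\alpha,\beta)\in\mathcal{M}}\pi_R(Z(\alpha,\beta))$. Each $Z(\alpha,\beta)$ is compact in $G_\Lambda$ (cf.\ \cite{KP}), hence each $\pi_R(Z(\alpha,\beta))$ is compact, and the union is finite; so $\Phi_{R_\Lambda}^{-1}(Z(v)\times Z(w))$ is compact. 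Combined with the reduction of the first paragraph, this shows $\Phi_{R_\Lambda}$ is proper, i.e.\ $R_\Lambda$ is proper.

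There is no serious obstacle here; the argument is a simplification of the graph case, and the only points that need care are bookkeeping ones: that the reduction of an ancestry pair to a minimal one terminates (it does, because $|d(\alpha)|$ strictly decreases and each split is valid by the factorization property), and the correct use of the identity $\Phi_{G_\Lambda}=\Phi_{R_\Lambda}\circ\pi_R$ together with surjectivity of $\pi_R$ to move the computation from $G_\Lambda$ down to $R_\Lambda$. The genuinely hard direction --- a converse characterising proper $R_\Lambda$, and more broadly continuous trace in the $k$-graph setting --- is exactly what this lemma does not attempt.
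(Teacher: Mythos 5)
Your argument is correct and is essentially the paper's own proof: the paper simply cites the decomposition $\Phi_R^{-1}(Z(v)\times Z(w))=\bigcup_{(\alpha,\beta)\in\mathcal{M}}\pi_R(Z(\alpha,\beta))$ from the proof of Theorem~\ref{main-guy} and concludes by finiteness of $\mathcal{M}$, which is exactly your computation with the details (reduction to the compact-open cover, termination of the reduction to minimal pairs, and the identity $\Phi_{R_\Lambda}^{-1}=\pi_R\circ\Phi_{G_\Lambda}^{-1}$ on these sets) written out. Your observation that the cycle-removal step of the $1$-graph case is not needed here is also accurate, since strong finite ancestry bounds all minimal ancestry pairs rather than only the cycle-free ones.
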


\begin{proof}
We adopt the notation of Theorem \ref{main-guy}. As in the proof of Theorem \ref{main-guy}, we see that \[
 \Phi^{-1}_R(Z(v) \times Z(w)) = \cup_{(\alpha,\beta) \in \mathcal{M}} \pi_R(Z(\alpha,\beta)) \]
 where $\mathcal{M}$ is the set of minimal finite ancestry pairs for $v$ and $w$. Strong finite ancestry then implies that $\Phi_R$ is proper, so that $R_\Lambda$ is proper. 
\end{proof}

The following lemma is used to show that finite ancestry is necessary for a (strongly aperiodic) $k$-graph to yield a $C^*$-algebra with continuous trace. A $k$-graph is called \emph{strictly aperiodic} if for any $x \in \Lambda^\infty$ and $p,q \in \mathbb{N}^k$, $\sigma^p x = \sigma^q x$ implies $p=x$ (that is, if there are no periodic infinite paths in $\Lambda$). This implies that every isotropy group in $G_\Lambda$ is trivial, so that the map $G_\Lambda \mapsto R_\Lambda$ is an isomorphism. 
\begin{lemma} \label{overlap}
Let $\Lambda$ be a strictly aperiodic row-finite $k$-graph with no sources. Let $(\alpha,\beta)$ and $(\gamma,\delta)$ be two distinct minimal ancestry pairs in $\Lambda$. Then $Z(\alpha,\beta) \cap Z(\gamma,\delta)= \emptyset$. 
\end{lemma}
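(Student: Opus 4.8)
The plan is to take a hypothetical common element $(x,n,y)\in Z(\alpha,\beta)\cap Z(\gamma,\delta)$ and show it forces $(\alpha,\beta)=(\gamma,\delta)$. The decisive observation is that a strictly aperiodic row-finite $k$-graph with no sources contains no path $\mu$ of nonzero degree with $r(\mu)=s(\mu)$: for $k=1$ the infinite path $\mu^\infty$ would satisfy $\sigma^{d(\mu)}(\mu^\infty)=\mu^\infty$, and for general $k$ one uses the no-sources hypothesis to extend $\mu$ to an infinite path $z$ that is periodic with period $d(\mu)$; either way this contradicts strict aperiodicity. Since the definition of an ancestry pair $(\lambda,\mu)$ for $v,w$ demands $r(\mu)=s(\mu)=w$, the path $\mu$ must be the vertex $w$. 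Hence the second coordinates $\beta$ and $\delta$ of our two pairs are vertices; in particular $\beta=s(\alpha)$ and $\delta=s(\gamma)$, $d(\beta)=d(\delta)=0$, and every ancestry pair is automatically minimal (so the minimality hypothesis is present only to conform with the general definition).

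With this in hand the argument is short. Using $\beta=s(\alpha)$ one has $Z(\alpha,\beta)=\{(\alpha z,\,d(\alpha),\,z):z\in\Lambda^\infty,\ r(z)=s(\alpha)\}$, and similarly $Z(\gamma,\delta)=\{(\gamma z',\,d(\gamma),\,z'):z'\in\Lambda^\infty,\ r(z')=s(\gamma)\}$. Writing the common element as $(\alpha z,d(\alpha),z)=(\gamma z',d(\gamma),z')$ and comparing the three coordinates in turn: the third coordinates give $z=z'$, whence $s(\alpha)=r(z)=r(z')=s(\gamma)$ and therefore $\beta=\delta$; the middle coordinates give $d(\alpha)=d(\gamma)$; and then $\alpha=(\alpha z)(0,d(\alpha))=(\gamma z')(0,d(\gamma))=\gamma$, since an infinite path restricts on the box $(0,d(\alpha))$ to its prescribed initial segment. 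Hence $(\alpha,\beta)=(\gamma,\delta)$, contradicting distinctness, so the intersection is empty.

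The second paragraph is routine bookkeeping with the explicit basic open sets of $G_\Lambda$; the real content is the first step. For $k=1$ it is immediate, but for $k\ge 2$ promoting a degree-nonzero loop at a vertex to a genuine periodic infinite path (a degree-preserving functor on all of $\Omega_k$, not merely the sequence of composites $\mu^n$) requires a small argument invoking row-finiteness and the no-sources condition, which I would isolate as a preliminary lemma. It is worth noting that this step — equivalently, the literal requirement $s(\mu)=w$ in the definition of an ancestry pair — is essential: were ancestry pairs allowed to have arbitrary common source, distinct minimal pairs whose degrees $d(\alpha),d(\gamma)$ are incomparable could overlap in a $2$-graph, and the statement would fail.
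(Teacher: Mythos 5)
There is a genuine gap, and it starts before the proof does: you have built everything on a literal reading of the clause ``$s(\lambda)=s(\mu)=w$'' in the paper's definition of a $k$-graph ancestry pair, which is evidently a typo for ``$s(\lambda)=s(\mu)$''. The $1$-graph definition earlier in the paper requires only a common source, and the way ancestry pairs are actually used --- the identity $\Phi^{-1}(Z(v)\times Z(w))=\bigcup_{(\alpha,\beta)\in\mathcal{M}}Z(\alpha,\beta)$ in Lemma \ref{strong-finite} and Theorem \ref{k-big} --- forces the common source $s(\alpha)=s(\beta)=r(\sigma^p x)$ to be an arbitrary vertex, not $w$. Under your reading the second entries $\beta,\delta$ collapse to vertices, every ancestry pair is minimal, the covering identity above is false, ``strong finite ancestry'' becomes nearly vacuous, and the lemma reduces to the observation that $Z(\alpha,s(\alpha))$ and $Z(\gamma,s(\gamma))$ are disjoint for $\alpha\neq\gamma$ --- which needs neither aperiodicity nor minimality and cannot do the job it is asked to do in Theorem \ref{k-big}(ii). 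Your bookkeeping for that degenerate statement is fine, but it is not the statement the paper needs.

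With the intended definition, the case where $d(\alpha)\geq d(\gamma)$ and $d(\beta)\geq d(\delta)$ (or the reverse) goes through essentially by your style of argument plus minimality and strict aperiodicity: one factors $\alpha=\gamma\alpha'$, $\beta=\delta\beta'$, and either $d(\alpha')=d(\beta')$ (so $\alpha'=\beta'$ is a common suffix, contradicting minimality) or $\sigma^{d(\alpha')}w=\sigma^{d(\beta')}w$ with $d(\alpha')\neq d(\beta')$ (contradicting aperiodicity). The real content of the lemma --- and the part your proposal does not engage with at all --- is the case $k\geq 2$ with $d(\alpha)-d(\gamma)=d(\beta)-d(\delta)$ having coordinates of mixed sign, which the paper attacks by iterating coordinatewise partial shifts of $\alpha z=\gamma w$ and $\beta z=\delta w$ and playing the resulting relations off against strict aperiodicity. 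You instead assert in passing that in this incomparable situation ``the statement would fail'' in a $2$-graph. That is a substantive claim that flatly contradicts the lemma being reviewed, and you give no counterexample; to support it you would have to exhibit a strictly aperiodic, row-finite, source-free $2$-graph containing two distinct degree-$(1,1)$ squares sharing both the same source-side red edge in the blue--red factorization and the same source-side blue edge in the red--blue factorization (note that in product $2$-graphs $E_1\times E_2$ this configuration is impossible, since there the incomparable case always produces a common suffix violating minimality). The incomparable case is indeed the delicate point of this lemma, but an unsupported assertion that it is a counterexample is not a proof of anything, and it leaves the intended statement entirely unaddressed.
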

\begin{proof}\emph{Claim}: It suffices to show that if $Z(\alpha, \beta) \cap Z(\gamma,\delta) \neq \emptyset$, then either $d(\alpha) \geq d(\gamma)$ and $d(\beta) \geq d(\delta)$, or $d(\gamma) \geq d(\alpha)$ and $d(\delta) \geq d(\beta)$. For suppose that $(\alpha z, n, \beta z) = (\gamma w, n, \delta w)$, $d(\alpha) \geq d(\gamma)$ and $d(\beta) \geq d(\delta)$. Then $\alpha z = \gamma \alpha' z  = \gamma w$, so that $\alpha' z = w$. We also have $\beta z = \delta \beta' z = \delta w$, so that $\beta' z = w$. If $d(\alpha')=d(\beta')$, then this shows that $\alpha' = \beta'$, so that $(\alpha,\beta) = (\gamma \alpha', \beta \alpha')$, contradicting minimality. If $d(\alpha') \neq d(\beta')$, then $\sigma^{d(\alpha')} w = z = \sigma^{d(\beta')} w$, so that $w$ is periodic, against hypothesis. The other case follows by symmetry. This establishes the claim. 

If the intersection is nonzero then as above we have $(\alpha z, n, \beta z) = (\gamma w, n, \delta w)$ for some $z,w \in \Lambda^\infty$. Moreover, $n = d(\alpha)-d(\beta)=d(\gamma)-d(\delta)$, so that $d(\alpha)-d(\gamma)=d(\beta)-d(\delta)$. Thus if $d(\alpha) \geq d(\gamma)$, we also have $d(\beta) \geq d(\delta)$, reducing to the claim. Thus we assume that $d(\alpha)_1 - d(\gamma)_1 = d(\beta)_1 - d(\delta)_1 > 0$ and $d(\gamma)_2 - d(\alpha)_2 = d(\delta)_2-d(\beta)_2 > 0$ (*). From the equation $\alpha z = \gamma w$ and the inequality $d(\alpha)_1 > d(\gamma)_1$, we obtain $\alpha(d(\gamma)_1,d(\alpha)) z = \gamma( d(\gamma)_1, d(\gamma)) w$; call this path $x_1$. Similarly we have $\beta(d(\beta)_1,d(\beta))z = \delta(d(\delta_1),d(\delta)) w$. The conditions (*) imply that $d(\alpha)-d(\gamma)_1 + d(\delta)-d(\delta)_1 = d(\beta) - d(\delta)_1 + d(\gamma)-d(\gamma)_1$. 

Now iterating the shift we obtain \[ \sigma^{d(\delta)-d(\delta)_1} z = \sigma^{d(\alpha)-d(\gamma)_1 + d(\delta)-d(\delta)_1} x_1 = \sigma^{d(\beta) - d(\delta)_1 + d(\gamma)-d(\gamma)_1} x_1 = \sigma^{d(\beta)-d(\delta)_1} w.\]

Similarly we obtain 
\[
\sigma^{d(\alpha)-d(\alpha)_2} w = \sigma^{d(\gamma)-d(\gamma)_2} z.\]

Now we can write 
\[
\sigma^{d(\delta)-d(\delta)_1} \sigma^{d(\alpha)-d(\alpha)_2} w = \sigma^{d(\delta)-d(\delta)_1} \sigma^{d(\gamma)-d(\gamma)_2} z \] \[ = \sigma^{d(\gamma)-d(\gamma)_2 + d(\delta)-d(\delta)_1} z =\sigma^{d(\gamma)-d(\gamma)_2} \sigma^{d(\beta)-d(\delta)_1} w.\]

Now by strict aperiodicity we must have $d(\delta) - d(\delta)_1 + d(\alpha)-d(\alpha)_2 = d(\gamma)-d(\gamma)_2 + d(\beta)-d(\delta)_1$. Compare second coordinates in the degrees; we must have $d(\delta)_2$ on the left and $d(\beta)_2$ on the right. This gives $d(\delta)_2 = d(\beta)_2$, a contradiction. 
\end{proof}

\begin{theorem} \label{k-big}
Let $\Lambda$ be a row-finite $k$-graph with no sources. 
\begin{itemize}
\item[(i)] if $\Lambda$ has strong finite ancestry and is tight, then $C^*(\Lambda)$ has continuous trace. 
\item[(ii)] If $C^*(\Lambda)$ has continuous trace and $\Lambda$ is strictly aperiodic, then $\Lambda$ has strong finite ancestry. 
\end{itemize}
\end{theorem}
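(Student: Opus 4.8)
My plan is to treat the two parts separately, mirroring the structure of the proof of Theorem \ref{main-guy} but substituting the $k$-graph tools developed in the section. For part (i): by Theorem~1.1 of \cite{MRW3} (quoted earlier), since $G_\Lambda$ is second countable, locally compact, Hausdorff, étale with a Haar system, and has abelian (indeed cyclic or trivial) isotropy, it suffices to verify the two conditions of that theorem. Continuity of the stabilizer map is exactly the hypothesis that $\Lambda$ is tight, via Lemma~\ref{tight}. Properness of the $R_\Lambda$-action on $\Lambda^\infty$ is exactly the conclusion of Lemma~\ref{strong-finite}, which only requires strong finite ancestry. So part (i) is essentially an assembly of results already in hand: I would write $C^*(\Lambda)\cong C^*(G_\Lambda)$ by the Kumjian–Pask theorem, invoke \cite{MRW3}, and cite Lemmas~\ref{tight} and~\ref{strong-finite} for the two bullet points.

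For part (ii): assume $C^*(\Lambda)$ has continuous trace and $\Lambda$ is strictly aperiodic. Strict aperiodicity forces every isotropy group of $G_\Lambda$ to be trivial, so $\pi_R : G_\Lambda \to R_\Lambda$ is a groupoid isomorphism and the principal groupoid $R_\Lambda$ carries the same topology as $G_\Lambda$. By \cite{MRW3} again, continuous trace of $C^*(G_\Lambda)$ forces the $R_\Lambda$-action on $\Lambda^\infty$ to be proper, i.e. $\Phi_R : R_\Lambda \to \Lambda^\infty \times \Lambda^\infty$ is proper. Fix vertices $v,w$. As in Theorem~\ref{main-guy}, the sets $Z(v)\times Z(w)$ are compact-open and cover $\Lambda^\infty\times\Lambda^\infty$, and one checks that
\[
\Phi_R^{-1}(Z(v)\times Z(w)) \;=\; \bigcup_{(\alpha,\beta)\in\mathcal{M}} \pi_R(Z(\alpha,\beta)),
\]
where $\mathcal{M}$ is the set of minimal ancestry pairs for $v$ and $w$. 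Each $\pi_R(Z(\alpha,\beta))$ is compact-open (since $\pi_R$ is a homeomorphism in the aperiodic case and each $Z(\alpha,\beta)$ is compact-open), so properness gives a finite subcover: finitely many pairs $(\alpha_1,\beta_1),\dots,(\alpha_n,\beta_n)\in\mathcal{M}$ with $\bigcup_{(\alpha,\beta)\in\mathcal{M}}\pi_R(Z(\alpha,\beta)) = \bigcup_{i=1}^n \pi_R(Z(\alpha_i,\beta_i))$. Now I invoke Lemma~\ref{overlap}: since $\Lambda$ is strictly aperiodic, for distinct minimal ancestry pairs the sets $Z(\alpha,\beta)$ and $Z(\gamma,\delta)$ are disjoint, hence (again using that $\pi_R$ is injective here) their $\pi_R$-images are disjoint. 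Therefore if $(\alpha,\beta)\in\mathcal{M}$ is different from each $(\alpha_i,\beta_i)$, then $\pi_R(Z(\alpha,\beta))$ is disjoint from the finite union, contradicting the displayed equality unless $Z(\alpha,\beta)=\emptyset$; but an ancestry pair always gives a nonempty cylinder since $\Lambda$ has no sources. Hence $\mathcal{M}$ is finite, which is precisely strong finite ancestry for the pair $v,w$. Since $v,w$ were arbitrary, $\Lambda$ has strong finite ancestry.

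The step I expect to require the most care is the reduction in part (ii) showing that properness of $\Phi_R$ genuinely forces $\mathcal{M}$ finite rather than merely producing a finite subcover with possible overlaps — this is where Lemma~\ref{overlap} (disjointness of cylinders over distinct minimal ancestry pairs in the aperiodic case) does the essential work, and one must be careful that $\pi_R$ being a homeomorphism onto $R_\Lambda$ (valid exactly because strict aperiodicity kills all isotropy) is what lets disjointness downstairs be inferred from disjointness upstairs. A secondary point to verify cleanly is the identity $\Phi_R^{-1}(Z(v)\times Z(w)) = \bigcup_{(\alpha,\beta)\in\mathcal{M}}\pi_R(Z(\alpha,\beta))$: one first computes $\Phi_G^{-1}(Z(v)\times Z(w)) = \bigcup_{(\alpha,\beta)} Z(\alpha,\beta)$ over all (not necessarily minimal) ancestry pairs, then uses that every $Z(\alpha,\beta)$ sits inside $Z(\lambda,\mu)$ for a minimal pair $(\lambda,\mu)$ (the $k$-graph analogue of Lemma~6), and finally applies $\pi_R$; in the aperiodic case no cycle-removal step is needed, which actually simplifies matters compared to Theorem~\ref{main-guy}.
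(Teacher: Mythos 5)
Your proposal is correct and follows essentially the same route as the paper: part (i) is the same assembly of the Kumjian--Pask isomorphism, Lemma~\ref{tight}, Lemma~\ref{strong-finite}, and \cite[Thm.~1.1]{MRW3}, and part (ii) identifies $G_\Lambda$ with $R_\Lambda$ via strict aperiodicity, decomposes $\Phi_R^{-1}(Z(v)\times Z(w))$ over minimal ancestry pairs, and uses Lemma~\ref{overlap} plus compactness to force $\mathcal{M}$ finite. Your explicit remark that each cylinder $Z(\alpha,\beta)$ is nonempty because $\Lambda$ has no sources is a detail the paper leaves implicit, but it changes nothing of substance.
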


\begin{proof}
(i): As $\Lambda$ is tight, we have that $G_\Lambda$ has continuous isotropy as in Lemma \ref{tight}. Lemma \ref{strong-finite} implies that $R_\Lambda$ is proper. Thus \cite[Thm. 1.1]{MRW3} implies that $C^*(\Lambda)$ has continous trace. 

(ii): Because $\Lambda$ is strictly aperiodic, we can identify the groupoids $G_\Lambda$ and $R_\Lambda$. Let $v$ and $w$ be two vertices of $\Lambda$; then \[ \Phi^{-1}(Z(v) \times Z(w)) = \bigcup_{(\alpha,\beta) \in \mathcal{M}} Z(\alpha,\beta)\] as in the proof of Theorem \ref{main-guy}. Lemma \ref{overlap} implies that the sets $Z(\alpha,\beta)$ are pairwise disjoint and open. Thus $\mathcal{M}$ must be finite by compactness of $\Phi^{-1}(Z(v) \times Z(w))$, which implies that $\Lambda$ has strong finite ancestry. 
\end{proof}

\begin{remark}
Theorem \ref{k-big} is not as complete as Theorem \ref{main-guy}; a complete description of those $k$-graphs which define continuous-trace $C^*$-algebras like Theorem \ref{main-guy} seems out of reach. This is because the definition of ``cycle-free'' path in a $k$-graph does not readily carry over or generalize to the $k$-graph case.

Generally the conditions discussed on $k$-graphs in the literature (such as the aperiodicity condition in \cite{KP}) imply that the interior of the isotropy bundle of $G_\Lambda$ coincides with $G_\Lambda^{(0)}$ (the interior of the isotropy bundle always includes the unit space). Our definition of tight $k$-graphs is in some sense the opposite of this--if $\Lambda$ is tight then the interior of the isotropy bundle is the entire isotropy bundle. Strict aperiodicity is amounts to saying that the isotropy bundle coincides with the unit space. 
\end{remark}

Desingularization is in general more complicated for higher-rank graphs, so it seems perhaps unlikely that this could be easily extended to higher-rank graphs with sources. However, we can give some necessary conditions for a $k$-graph to satisfy in order that its $C^*$-algebra have continuous trace. The following definition is modified somewhat from \cite{ES}

\begin{definition}[\cite{ES}]
Let $\Lambda$ be a row-finite graph with no sources. Then a pair $(\lambda,\mu) \in \Lambda \times \Lambda$ is a \emph{generalized cycle} if $r(\lambda)=r(\mu)$ and $s(\lambda)=s(\mu)$ and $Z(\lambda) \subset Z(\mu)$. We say that a generalized cycle $(\lambda,\mu)$ \emph{has an entrance} if $(\mu,\lambda)$ is \emph{not} a generalized cycle. (That is, if $Z(\lambda) \subsetneq Z(\mu)$.) 
\end{definition}

Recall that a projection $p$ in a $C^*$-algebra $A$ is \emph{infinite} if there exists $v \in A$ with $v^*v = p$ and $vv ^* < p$; that is, if it is Murray-von Neumann equivalent to a proper subprojection of itself. 

\begin{lemma}{\cite[Cor. 3.8]{ES}}
If $\Lambda$ contains a generalized cycle with entrance, then $C^*(\Lambda)$ contains an infinite projection. 
\end{lemma}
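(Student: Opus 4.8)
The plan is to write down the infinite projection explicitly from the partial isometries attached to the generalized cycle $(\lambda,\mu)$. I take $P = s_\mu s_\mu^*$ as the candidate infinite projection and $w = s_\lambda s_\mu^*$ as the partial isometry that will witness its infiniteness.

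First I would record two routine consequences of the defining relations. Since $r(\lambda)=r(\mu)$ and, crucially, $s(\lambda)=s(\mu)$, relation (3) gives $s_\lambda^* s_\lambda = s_{s(\lambda)} = s_{s(\mu)} = s_\mu^* s_\mu$, while relation (2) applied to the vertex $s(\mu)$ (which satisfies $\mu\,s(\mu)=\mu$ in $\Lambda$) gives $s_\mu s_{s(\mu)} = s_\mu$. Hence
\[
w^* w = s_\mu s_\lambda^* s_\lambda s_\mu^* = s_\mu s_{s(\mu)} s_\mu^* = s_\mu s_\mu^* = P, \qquad w w^* = s_\lambda s_\mu^* s_\mu s_\lambda^* = s_\lambda s_{s(\lambda)} s_\lambda^* = s_\lambda s_\lambda^*.
\]
So $w$ is a partial isometry implementing a Murray--von Neumann equivalence $P \sim s_\lambda s_\lambda^*$, and it remains only to see that $s_\lambda s_\lambda^*$ is a \emph{proper} subprojection of $P$.

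For that I would pass to the groupoid model. By \cite{KP} we have $C^*(\Lambda)\cong C^*(G_\Lambda)$ via an isomorphism carrying $s_\nu$ to $\mathbf{1}_{Z(\nu,s(\nu))}$, so that $s_\lambda s_\lambda^*$ and $s_\mu s_\mu^*$ become the characteristic functions $\mathbf{1}_{Z(\lambda)}$ and $\mathbf{1}_{Z(\mu)}$ of the compact open sets $Z(\lambda), Z(\mu)\subseteq\Lambda^\infty$ sitting inside $C_0(\Lambda^\infty)\subseteq C^*(G_\Lambda)$. The generalized-cycle hypothesis $Z(\lambda)\subseteq Z(\mu)$ then gives $s_\lambda s_\lambda^* \le s_\mu s_\mu^* = P$, and the entrance hypothesis, which by definition means exactly $Z(\lambda)\subsetneq Z(\mu)$, gives $s_\lambda s_\lambda^* \neq P$ since distinct compact open subsets of $\Lambda^\infty$ have distinct characteristic functions. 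Therefore $w w^* = s_\lambda s_\lambda^* < P = w^* w$ with $P\neq 0$, exhibiting $P$ as an infinite projection in $C^*(\Lambda)$.

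The only step that is not a one-line manipulation of the Cuntz--Krieger relations is the translation of the inclusion of cylinder sets into an (in)equality of projections; this is where faithfulness is used, and it is most painlessly imported from the groupoid picture. Alternatively one could avoid $G_\Lambda$ by combining relation (4) with the combinatorial characterization of $Z(\lambda)\subseteq Z(\mu)$ from \cite{ES} and the gauge-invariant uniqueness theorem, but the groupoid route is shorter.
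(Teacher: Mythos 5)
Your argument is correct. Note, however, that the paper does not prove this lemma at all: it is imported verbatim as \cite[Cor.~3.8]{ES}, so there is no in-paper proof to compare against. What you have written is essentially a self-contained reconstruction of the Evans--Sims argument, adapted to the cylinder-set formulation of ``generalized cycle'' that this paper uses. The algebra is right: $w=s_\lambda s_\mu^*$ satisfies $w^*w=s_\mu s_\mu^*$ and $ww^*=s_\lambda s_\lambda^*$ by relations (2) and (3) together with $s(\lambda)=s(\mu)$, and the only genuinely nontrivial step is exactly the one you isolate, namely converting $Z(\lambda)\subseteq Z(\mu)$ (respectively $Z(\lambda)\subsetneq Z(\mu)$) into $s_\lambda s_\lambda^*\leq s_\mu s_\mu^*$ (respectively $\neq$). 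Routing that through the isomorphism $C^*(\Lambda)\cong C^*(G_\Lambda)$, under which $s_\nu s_\nu^*$ becomes $\mathbf{1}_{Z(\nu)}\in C_0(\Lambda^\infty)$, is legitimate and arguably the cleanest option given how the paper defines generalized cycles; in \cite{ES} the same inequality is obtained combinatorially from relation (4) via common extensions of $\lambda$ and $\mu$, which avoids the groupoid model but costs a lemma. One small point worth making explicit: you need $P\neq 0$ for the conclusion, which follows because $Z(\mu)\supseteq Z(\mu)\setminus Z(\lambda)\neq\emptyset$ (or from the no-sources hypothesis), and your write-up only asserts this in passing.
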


The following simple observation is probably not new, but is proven here for ease of reference. 

\begin{lemma}
If $A$ is a $C^*$-algebra containing an infinite projection, then $A$ does not have continuous trace. 
\end{lemma}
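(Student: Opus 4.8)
The plan is to argue by contrapositive using the characterization of continuous trace via pointwise rank-one projections. Suppose $A$ has continuous trace and contains an infinite projection $p$; I will derive a contradiction. First I would recall the structural fact that a continuous-trace $C^*$-algebra has Hausdorff spectrum $\hat{A}$ and, more to the point, that every element $a \in A$ has the property that the function $s \mapsto \operatorname{rank}(s(a))$ is locally constant (in fact lower semicontinuous in general, and the continuous-trace condition forces it to be locally finite and continuous near each point). The cleanest route, though, is to use the fact that continuous-trace algebras are stably isomorphic to $C_0(X,\cpt)$-type algebras — more precisely, a continuous-trace algebra is locally Morita equivalent to a commutative algebra, so all of its projections have ``finite rank'' in every irreducible representation.

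The key steps, in order, would be: (1) observe that if $\pi$ is any irreducible representation of $A$, then $\pi(p)$ is a projection on the Hilbert space $\hil_\pi$, and since $A$ has continuous trace, $\pi(p)$ must be finite-rank. To see this, note that by the definition of continuous trace there is an open set $U \ni [\pi]$ and an element $a \in A$ with $s(a)$ a rank-one projection for all $s \in U$; a standard argument (e.g.\ \cite[Ch.\ 5]{RaeWil}) then shows that restricted to the corner determined by $U$, or after passing to the ideal $A_U$, the algebra looks like sections of a bundle with fibre $\cpt(\hil)$ and every projection in $A$ has finite rank in each fibre over $U$. Since $\hat{A}$ is covered by such sets $U$, $\pi(p)$ is finite-rank for every irreducible $\pi$. (2) Now use that $p$ is infinite: there is $v \in A$ with $v^*v = p$ and $vv^* = q < p$. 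Apply any irreducible representation $\pi$: then $\pi(v)$ is a partial isometry on $\hil_\pi$ with $\pi(v)^*\pi(v) = \pi(p)$ and $\pi(v)\pi(v)^* = \pi(q) \leq \pi(p)$. (3) Finally, derive the contradiction: $\pi(p)$ has finite rank $n_\pi < \infty$, and $\pi(v)$ implements a Murray--von Neumann equivalence between $\pi(p)$ and $\pi(q)$, so $\operatorname{rank}\pi(q) = \operatorname{rank}\pi(p) = n_\pi$; but a finite-rank projection cannot be Murray--von Neumann equivalent to a \emph{proper} subprojection of itself (this is just the rank-nullity statement for finite-dimensional subspaces). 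The only escape is if $\pi(p) = 0$ for every irreducible $\pi$, i.e.\ $p = 0$, which contradicts $p$ being infinite (an infinite projection is in particular nonzero, since $0$ is not Murray--von Neumann equivalent to a proper subprojection of itself). This completes the proof.

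The main obstacle I expect is step (1): carefully justifying that every projection in a continuous-trace algebra is finite-rank in each irreducible representation. One must be a little careful because $p$ is a global element while the continuous-trace condition is local; the right framing is to fix an irreducible $\pi$, choose the open neighborhood $U$ of $[\pi]$ and element $a \in A$ from the definition, and then work inside the (possibly non-unital) subquotient of $A$ corresponding to $U$. Over $U$ the algebra has a ``Fell bundle'' / continuous-trace-over-$U$ structure with all fibres compact operators, and one invokes the known fact that a projection in $C_0(U,\cpt(\hil))$ (or in a bundle locally of this form) has finite rank in each fibre because it is a norm-continuous field of projections and the rank function on projections in $\cpt(\hil)$ is an integer-valued, hence locally constant, function on each compact piece — and $U$ can be shrunk to be relatively compact. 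Alternatively, and perhaps more economically, one can cite directly that continuous-trace algebras have the property that $K_0$-classes of projections are represented by ``finite rank'' in all irreducible representations, or simply invoke that a continuous-trace algebra is a liminal (CCR) algebra, so $\pi(A) = \cpt(\hil_\pi)$ for every irreducible $\pi$; then $\pi(p) \in \cpt(\hil_\pi)$ is a compact projection, hence automatically finite-rank, and step (1) is immediate. I would likely take this last route: \emph{continuous-trace $\Rightarrow$ CCR $\Rightarrow$ every irreducible image of a projection is a finite-rank projection}, which makes the whole argument short. Everything after that (steps (2) and (3)) is the elementary linear-algebra observation that finite-rank projections are not infinite.
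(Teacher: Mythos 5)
Your proof is essentially the paper's: both arguments come down to the fact that a continuous-trace algebra is liminal (CCR), so every irreducible representation maps into the compact operators, where all projections have finite rank and hence cannot be Murray--von Neumann equivalent to a proper subprojection. (The paper cites \cite[Thm.\ 6.1.11]{Pedersen} for the CCR step and skips the bundle-theoretic detour you sketch in step (1); your shortcut via CCR is exactly the route it takes.) One small slip in your step (3): the ``escape'' from the rank argument is not $\pi(p)=0$ but rather $\pi(q)=\pi(p)$, i.e.\ $\pi(p-q)=0$; the correct fix, which is what the paper does at the outset, is to choose the irreducible representation $\pi$ so that $\pi(p-q)\neq 0$ (possible since $p-q\neq 0$), which guarantees $\pi(q)$ is a \emph{proper} subprojection of $\pi(p)$ and the finite-rank contradiction goes through.
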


\begin{proof}
Let $p$ be a projection in $A$ with a proper subprojection $q$ such that $p \sim q$. Take an irreducible representation $\pi:A \to B(H)$  such that $\pi(p-q) \neq 0$. Then $\pi(q) < \pi(q)$ are equivalent projections in $B(H)$. All compact projections are finite rank, so it cannot be the case that the range of $\pi$ lies within the compacts. As every irreducible representation of a $C^*$-algebra with continuous trace has range within the compact operators (\cite[Thm. 6.1.11]{Pedersen}), we see that $A$ does not have continuous trace. 
\end{proof}

\begin{corollary}
If $\Lambda$ is a row-finite $k$-graph with no sources that contains a generalized cycle with entrance, then $C^*(\Lambda)$ does not have continuous trace. 
\end{corollary}

It is somewhat unsatisfactory that the question of when a higher-rank graph yields a continuous-trace $C^*$-algebra should have such a partial answer in comparison with the graph case. However this is somewhat in line with the case of other $C^*$-algebraic properties: it is difficult to decide when a $k$-graph yields an AF algebra or a purely infinite (simple) $C^*$-algebra, whereas for the graph case it is straightforward.

\bibliographystyle{plain}

\bibliography{Bibliography.bib}

\begin{thebibliography}{10}

\bibitem{Drinen}
D.~Drinen.
\newblock Viewing {AF} algebras as graph algebras.
\newblock {\em Proc. Amer. Math. Soc.}, 128(7):1991--2000, 1999.

\bibitem{DrinTom}
D.~Drinen and M.~Tomforde.
\newblock The ${C^*}$-algebras of arbitrary graphs.
\newblock {\em Rocky Mountain J. Math.}, 35:105--135, 2005.

\bibitem{ES}
D.~G. Evans and A.~Sims.
\newblock When is the {C}untz-{K}rieger algebra of a higher-rank graph
  approximately finite-dimensional?
\newblock {\em J. Funct. Anal.}, 263(1):183--215, 2012.

\bibitem{Goehle2}
G.~Goehle.
\newblock Groupoid {$C^*$}-algebras with {H}ausdorff spectrum.
\newblock {\em Bull. Aust. Math. Soc.}, 88:232--242, 2013.

\bibitem{Hazlewood}
R.~Hazlewood.
\newblock {\em Categorising the operator algebras of groupoids and higher-rank
  graphs}.
\newblock PhD thesis, The University of New South Wales, 2013.

\bibitem{KP}
A.~Kumjian and D.~Pask.
\newblock Higher-rank graph {$C^*$}-algebras.
\newblock {\em New York J. Math.}, 6:1--20, 2001.

\bibitem{KPRR}
A.~Kumjian, D.~Pask, I.~Raeburn, and J.~Renault.
\newblock Graphs, groupoids, and {C}untz-{K}rieger algebras.
\newblock {\em J. Funct. Anal.}, 144:505--541, 1997.

\bibitem{LalMil}
S.~Lalonde and D.~Milan.
\newblock Amenability and uniqueness for groupoids associated with inverse
  semigroups. ar{X}iv:1511.01517.
\newblock November 2015.

\bibitem{MRW3}
P.~Muhly, J.~Renault, and D.~Williams.
\newblock Continuous-trace groupoid ${C^*}$-algebras, {III}.
\newblock {\em Transactions of the American Mathematical Society},
  348(9):3621--3641, 1996.

\bibitem{Pedersen}
G.K. Pedersen.
\newblock {\em ${C^*}$-{A}lgebras and their {A}utomorphism {G}roups}.
\newblock Academic Press, 1979.

\bibitem{Raeburn}
I.~Raeburn.
\newblock {\em {G}raph {A}lgebras}.
\newblock CBMS Lecture Notes. American Mathematical Society, 2005.

\bibitem{RaeWil}
I.~Raeburn and D.P. Williams.
\newblock {\em {M}orita {E}quivalence and {C}ontinuous-{T}race
  ${C^*}$-{A}lgebras}, volume~60 of {\em Mathematical Surveys and Monographs}.
\newblock American Mathematical Society, 1998.

\bibitem{Renault}
J.~Renault.
\newblock {\em A {G}roupoid {A}pproach to {$C^*$}-{A}lgebras}.
\newblock Lecture Notes in Mathematics. Springer Verlag, 1980.

\bibitem{Tyler}
Jason Tyler.
\newblock Every {AF}-algebra is {M}orita equivalent to a graph algebra.
\newblock {\em Bull. Aust. Math. Soc.}, 69:237--240, 2004.

\end{thebibliography}

\end{document}